\theoremstyle{definition}
\theoremstyle{remark}
\numberwithin{equation}{section}
\newtheorem{tm}{Theorem}[section]
\newtheorem{rk}{Remark}[section]
\newtheorem{ap}{Assumption}[section]
\newtheorem{prop}{Proposition}[section]
\newtheorem{lm}{Lemma}[section]
\newcommand{\ee}{\mathbb E}
\newcommand{\ff}{\mathbb F}
\newcommand{\pp}{\mathbb P}
\newcommand{\nn}{\mathbb N}
\newcommand{\rr}{\mathbb R}
\newcommand{\zz}{\mathbb Z}
\newcommand{\LL}{\mathcal L}
\newcommand{\PP}{\mathcal P}
\newcommand{\TT}{\mathcal T}
\newcommand{\OOO}{\mathscr O}
\newcommand{\FFF}{\mathscr F}
\newcommand{\<}{\langle}
\renewcommand{\>}{\rangle}
\allowdisplaybreaks \allowdisplaybreaks[4]
\begin{document}
 
\title[$L^p$-Convergence Rate of BE Schemes for Monotone SDEs]
{$L^p$-Convergence Rate of Backward Euler Schemes for Monotone SDEs}

\author{Zhihui LIU}
\address{Department of Mathematics, Southern University of Science and Technology, 1088 Xueyuan Avenue, Shenzhen 518055, P.R. China}
\curraddr{}
\email{liuzh3@sustech.edu.cn}
% \thanks{The author is supported by Hong Kong RGC General Research Fund, No. 16307319, and the UGC Research Infrastructure Grant, No. IRS20SC39. }

\subjclass[2010]{Primary 60H35; 60H10, 60H15}

\keywords{Monotone SODEs, 
monotone SPDEs, 
backward Euler scheme,
$L^p$-convergence rate}

\date{\today}

\dedicatory{}

\begin{abstract}
We give a unified method to derive the strong convergence rate of the backward Euler scheme for monotone SDEs in $L^p(\Omega)$-norm, with general $p \ge 4$.
The results are applied to the backward Euler scheme of SODEs with polynomial growth coefficients.
We also generalize the argument to the Galerkin-based backward Euler scheme of SPDEs with polynomial growth coefficients driven by multiplicative trace-class noise.
\end{abstract}

\maketitle

%\linenumbers 

% \tableofcontents

\section{Introduction}

There is a general theory on strong error estimations for SODEs and SPDEs with Lipschitz coefficients; see, e.g., the monograph \cite{KP92, Kru14} and the references therein. 
For SODEs with one-sided Lipschitz continuous coefficients which grow super-linearly, the classical Euler--Maruyama (EM) scheme is known to diverge \cite{HJK11(PRSLSA)}. 
In particular, the authors in \cite{HJK11(PRSLSA)} obtained the divergence of $p$-moments for the EM scheme for all $p \in [1, \infty)$.
Recently, this divergence result is generalized to second-order parabolic SPDEs driven by white noise \cite{BHJKLS19}. 
Based on this type of divergence result, one has to use explicit or implicit strategies to design convergent numerical schemes for SDEs with polynomial growth coefficients.
 
For SODEs with polynomial growth coefficients, there are kinds of literature using explicit strategy including adaptive time step size technique \cite{LMS07(IMA), KL18(IMA), FG20(AAP)} and tamed or truncated argument \cite{CJM16(SIFM), HJK12(AAP), HJ15(MAMS), Sab16(AAP)} based on renormalized-increments to the scheme.
Besides, implicit strategy including backward Euler (BE) scheme or its split-step version \cite{HMS02(SINUM), AK17(BIT)} and stochastic $\theta$ scheme \cite{WWD20(BIT)} is also investigated. 
These explicit or implicit strategies are generalized to monotone SPDEs with polynomial growth coefficients; see, e.g., \cite{BJ19(SPA), BGJK17, JP20(IMA), Wan20(SPA)} using tamed or truncated argument, \cite{KLL15(JAP), KLL18(MN), FLZ17(SINUM), LQ20(IMA), LQ20(SPDE), QW19(JSC), QW20(SINUM)} using BE scheme or its split-step version; see also \cite{BCH19(IMA)} using time splitting scheme.
We also refer to \cite{Dor12(SINUM), CHL17(JDE), CHLZ19(JDE), HJ20(AOP)} where the authors used exponential integrability of both the exact and numerical solutions to SPDEs with non-Lipchitz coefficients, including the 2D stochastic Navier--Stokes equations, 1D stochastic nonlinear Schr\"odinger equation, 1D Cahn--Hilliard--Cook equation, and 1D stochastic Burgers equation.

% We refer to \cite{BJM21(BER), CGW21(JSC), CH19(SINUM)} for weak convergence rate of monotone SODEs and SPDEs.

When using explicit strategy, some of the previous results got the convergence rate in $L^p(\Omega)$-norm with $p \ge 2$.
For implicit strategy, most of the previous results derived the mean-square convergence rate, i.e., the convergence rate in $L^p(\Omega)$-norm with $p=2$. 
One of the main reasons why it is not a straight forward consequence of $L^p(\Omega)$-convergence rate for general $p$ is that it is not easy to derive the uniform boundedness of the $p$-moment of the implicit scheme in the general multiplicative noise case; see \cite{QW19(JSC)} for $L^p(\Omega)$-convergence rate for monotone SPDEs driven by additive noise. 

The main aim of the present paper is to establish the strong convergence rate of the BE scheme \eqref{be} for monotone SODEs and Galerkin-bassed BE schemes \eqref{be1} and \eqref{be2} for monotone SPDEs in $L^p(\Omega)$-norm with general $p \ge 4$.
We point out that our results are also valid for $p=2$.
Our first step is to derive the uniform boundedness of the $p$-moment of the BE scheme and related continuous-time interpolation process (see Proposition \ref{sta}) in Section \ref{sec2}.
Then we give the $L^p$-error representation between the exact solution and the BE scheme and interpolation process (see Theorems \ref{main1} and \ref{main2}), respectively. 
With the polynomial growth condition, we obtain the desired $L^p$-convergence rate which is exactly the same as in the Lipschitz case (see Theorem \ref{main3}). 
The proofs of these $L^p$-error representation and convergence rate are given in Section \ref{sec3}. 
These arguments and results are extended in the last section to monotone SPDEs.

\section{Preliminaries and Main Results}
\label{sec2}

Let $T \in (0, \infty)$ be fixed.
Consider the SODE 
\begin{align} \label{sde}
\begin{split}  
&{\rm d} X_t
=b(X_t) {\rm d}t + \sigma(X_t) {\rm d}W_t,  
\quad t \in (0, T], \\
&X(0)=X_0, 
\end{split}
\end{align}  
driven by an $\rr^m$-valued Wiener process $W$ on a filtered probability space $(\Omega, \FFF, \ff:=\{\FFF_t\}_{t \ge 0}, \pp)$, where $b: \rr^n \rightarrow \rr^n$ and $\sigma:\rr^n \rightarrow \rr^{n \times m}$ are continuous functions satisfying certain monotone conditions, and $X_0$ is an $\rr^n$-valued $\FFF_0$-measurable random variable.
Our main condition on the coefficients $b$ and $\sigma$ in Eq. \eqref{sde} is the following monotone  assumption.

\begin{ap} \label{ap-mon}
There exist $L_b \in \rr$ and $L_\sigma \ge 0$ such that for all $x,y\in \rr^n$, 
\begin{align} 
\<b(x)-b(y), x-y\> & \le L_b |x-y|^2, \label{A1} \tag{A1} \\ 
|\sigma(x)-\sigma(y)|_{HS} & \le L_{\sigma} |x-y|.  \label{B1} \tag{B1}
\end{align} 
\end{ap}

In the above and throughout Sections \ref{sec2} and \ref{sec3}, $|\cdot|$ denotes the Euclidean norm on $\rr^n$ and
$|\cdot|_{HS}$ denotes the Hilbert--Schmidt norm on $\rr^{n \times m}$: $|\sigma|_{HS}=(\sum_{i=1}^n \sum_{i=1}^m \sigma_{ij}^2)^{1/2}$.

\begin{rk} 
The conditions \eqref{A1} and \eqref{B1} yield the following coercivity condition for any $\epsilon>0$ and $x,y\in \rr^n$: 
\begin{align} 
& \<b(x), x\> \le (L_b+\epsilon)|x|^2+\frac1{4 \epsilon}|b(0)|^2, \label{coe} \\
& |\sigma(x)|_{HS} \le L_\sigma |x|+|\sigma(0)|_{HS}, \label{gro-g}
\end{align} 
and thus the following monotone and coercivity conditions with $L:=2 L_b+ L_\sigma^2$ and some constant $\alpha, \beta \ge 0$ hold for all $x,y\in \rr^n$: 
\begin{align*} 
& 2 \<b(x)-b(y), x-y\> + |\sigma(x)-\sigma(y)|_{HS}^2
\le L |x-y|^2, \\ 
& 2 \<b(x), x\> + |\sigma(x)|_{HS}^2 \le \alpha+\beta |x|^2.
\end{align*} 
Therefore, Eq. \eqref{sde} possesses a unique $\ff$-adapted solution $\{X_t\}_{t \in [0, T]}$ with continuous sample paths (see \cite[Theorem 3.14]{KR79} or \cite[Theorem 3.1.1]{LR15}). 
Moreover, if $\ee |X_0|^p<\infty$ for some $p \ge 2$, then there exists $C$ such that (see \cite[Lemma 3.2]{HMS02(SINUM)})
\begin{align} \label{sta-sde}
\ee \sup_{t \in [0, T]} |X_t|^p  \le C (1+ \ee |X_0|^p).
\end{align} 
\end{rk}

\subsection{Stability of Backward Euler Scheme and Related Interpolation Process}

Let $T<M \in \nn_+$ and $\tau:=T/M \in (0, 1)$.
Denote $\zz_M:=\{0,1,\cdots,M\}$ and $\zz_M^*:=\{1,\cdots,M\}$.
Under (A1) and $L_b \tau<1$, the following backward Euler (BE) scheme is (almost surely) solvable:
\begin{align}\label{be}
Y^M_{k+1}=Y^M_k + b(Y^M_{k+1}) \tau
+ \sigma(Y^M_k) \delta_k W, \quad k \in \zz_{M-1}; \quad Y^M_0=X_0.
\end{align}  

Introduce the continuous-time interpolation process 
\begin{align} \label{int}
Y^M_t=Y^M_k + b(Y^M_{k+1}) (t- t_k) + \sigma(Y^M_k) (W_t-W_{t_k}), 
\end{align} 
with $t \in [t_k, t_{k+1})$, $k \in \zz_{M-1}$, and $Y^M_T:=Y^M_M$.  
Then $\{Y^M_t:~t \in [0, T]\}$ is an It\^o process, which can be rewritten as  
\begin{align}  \label{int-ito}
Y^M_t=X_0 + \int_0^t b(Y^M_{\eta^+_M(s)}) {\rm d} s
+ \int_0^t \sigma(Y^M_{\eta_M(s)}) {\rm d}W_s, 
\end{align} 
where $\eta^+_M(t):=t_{k+1}$ and $\eta_M(t):=t_k$ when $t \in [t_k, t_{k+1})$, $k \in \zz_{M-1}$.

The main result in this part is the following moments' stability of the above BE scheme \eqref{be} and interpolation process \eqref{int}.
In what follows $C$ denotes a generic constant which would be different in each appearance.

\begin{prop} \label{sta} 
Let \eqref{A1} and \eqref{B1} hold for some $L_b \in \rr$ and $L_\sigma \ge 0$.
Assume that $\ee |X_0|^p<\infty$ for some $p \ge 4$, then for any $M \in \nn_+$ such that $2 L_b \tau<1$, there exists $C$ such that
\begin{align} \label{sta-yk}
\ee \sup_{k \in \zz_M} |Y^M_k|^p
\le C (1+ \ee |X_0|^p).
\end{align}      
Assume furthermore that $\sup_{k \in \zz_M} \ee |b(Y^M_k)|^p<\infty$, then 
\begin{align} \label{sta-int} 
\ee \sup_{t \in [0, T]} |Y^M_t|^p 
\le C  (1+ \ee |X_0|^p+ \sup_{k \in \zz_M} \ee |b(Y^M_k)|^p).
\end{align}   
\end{prop}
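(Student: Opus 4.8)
The plan is to prove the two bounds \eqref{sta-yk} and \eqref{sta-int} in turn, the first by a discrete energy estimate adapted to the implicit scheme, the second by feeding that bound into the It\^o formula for the interpolation process \eqref{int-ito}.

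\textbf{Step 1: the discrete bound \eqref{sta-yk}.} Starting from \eqref{be}, I would take the inner product of $Y^M_{k+1}=Y^M_k+b(Y^M_{k+1})\tau+\sigma(Y^M_k)\delta_kW$ with $Y^M_{k+1}$ itself; the implicitness of the drift is precisely what makes $\<b(Y^M_{k+1}),Y^M_{k+1}\>\tau$ appear with the ``good'' argument, so the coercivity bound \eqref{coe} can be applied directly. Using the elementary identity $2\<a,a-c\>=|a|^2-|c|^2+|a-c|^2$ with $a=Y^M_{k+1}$, $c=Y^M_k+\sigma(Y^M_k)\delta_kW$, one rearranges to get, after absorbing the $|a-c|^2=|b(Y^M_{k+1})\tau|^2$ term (legitimate once $2L_b\tau<1$, which forces $L_b\tau<1$ and hence solvability) and expanding $|Y^M_k+\sigma(Y^M_k)\delta_kW|^2$,
\begin{align*}
|Y^M_{k+1}|^2 \le (1+C\tau)|Y^M_k|^2 + C\tau + 2\<Y^M_k,\sigma(Y^M_k)\delta_kW\> + |\sigma(Y^M_k)\delta_kW|^2 - |\sigma(Y^M_k)\delta_kW|^2\cdot(\text{correction}),
\end{align*}
where \eqref{gro-g} controls $|\sigma(Y^M_k)|_{HS}$. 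Iterating this one-step inequality and raising to the power $p/2$, the deterministic part is handled by the discrete Gronwall lemma, the stochastic-integral part $\sum_j\<Y^M_j,\sigma(Y^M_j)\delta_jW\>$ is a martingale to which I apply the Burkholder--Davis--Gundy (BDG) inequality (here $p\ge4$ ensures $p/2\ge2$ so all moments in sight are finite), and the Young inequality reabsorbs the resulting $\ee\sup_k|Y^M_k|^{p}$ term into the left side with a small constant. A standard way to make the sup rigorous is first to prove a bound on $\ee|Y^M_k|^p$ for each fixed $k$ (no sup, no BDG), then use that to justify the BDG step for the running maximum. The conclusion is \eqref{sta-yk} with $C$ depending on $p,T,L_b,L_\sigma,|b(0)|,|\sigma(0)|_{HS}$ but not on $M$.

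\textbf{Step 2: the interpolation bound \eqref{sta-int}.} Apply the It\^o formula to $|Y^M_t|^p$ using \eqref{int-ito}:
\begin{align*}
|Y^M_t|^p = |X_0|^p + p\int_0^t |Y^M_s|^{p-2}\<Y^M_s, b(Y^M_{\eta^+_M(s)})\>{\rm d}s + p\int_0^t |Y^M_s|^{p-2}\<Y^M_s,\sigma(Y^M_{\eta_M(s)}){\rm d}W_s\> + \tfrac{p}{2}\int_0^t |Y^M_s|^{p-2}\big(|\sigma(Y^M_{\eta_M(s)})|_{HS}^2 + (p-2)|\sigma(Y^M_{\eta_M(s)})^\top \widehat{Y^M_s}|^2\big){\rm d}s,
\end{align*}
with $\widehat{Y^M_s}=Y^M_s/|Y^M_s|$. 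The drift term is estimated by Cauchy--Schwarz and Young as $|Y^M_s|^{p-1}|b(Y^M_{\eta^+_M(s)})| \le C|Y^M_s|^p + C|b(Y^M_{\eta^+_M(s)})|^p$; the diffusion bracket is controlled via \eqref{gro-g} by $C(1+|Y^M_{\eta_M(s)}|^2)$, and one bounds $|Y^M_{\eta_M(s)}|$ in terms of $\sup_{k}|Y^M_k|$ plus the increment $|\sigma(Y^M_{\eta_M(s)})(W_s-W_{\eta_M(s)})|$, or more simply observes $Y^M_{\eta_M(s)}=Y^M_{t_k}$ is one of the grid points. Taking $\sup_{t\le T}$, then expectation, then applying BDG to the stochastic integral and Young to reabsorb $\ee\sup_t|Y^M_t|^p$, one is left with $\ee|X_0|^p$, a constant, $\int_0^T\ee|Y^M_{\eta_M(s)}|^p{\rm d}s$ (bounded by Step 1), and $\int_0^T\ee|b(Y^M_{\eta^+_M(s)})|^p{\rm d}s \le T\sup_k\ee|b(Y^M_k)|^p$, which is exactly the term appearing on the right of \eqref{sta-int}. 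A final Gronwall in $t$ closes the estimate.

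\textbf{Main obstacle.} The delicate point is the reabsorption arguments: in Step 1 one must check that the correction term multiplying $|\sigma(Y^M_k)\delta_kW|^2$ coming from completing the square stays nonnegative (or at least bounded) under the hypothesis $2L_b\tau<1$ rather than just $L_b\tau<1$, and in both steps one must be careful that the $\ee\sup|\cdot|^p$ quantities are \emph{a priori} finite before using BDG to move them to the left-hand side --- this is why I would run the fixed-$k$ moment bound first. Everything else is routine discrete/continuous Gronwall and Young's inequality bookkeeping.
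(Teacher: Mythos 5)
Your proposal is correct and follows essentially the same route as the paper: test the implicit scheme with $Y^M_{k+1}$, use the coercivity afforded by the implicit drift together with the condition $2L_b\tau<1$ to get a one-step recursion, then raise to the power $p/2$, apply the discrete Burkholder inequality to the martingale term (where $p\ge 4$ is needed), reabsorb, and conclude by Gr\"onwall; the interpolation bound is likewise obtained via It\^o's formula and BDG exactly as in the paper. The only (cosmetic) difference is your choice of completing the square against $Y^M_k+\sigma(Y^M_k)\delta_kW$ rather than against $Y^M_k$, and your explicit remark about establishing fixed-$k$ moment finiteness before invoking BDG is a welcome point of care that the paper leaves implicit.
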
 
 
\begin{proof} 
For simplicity, set $b_{k+1}=b(Y^M_{k+1})$ and $\sigma_k=\sigma(Y^M_k)$.
Testing \eqref{be} by $Y^M_{k+1}$ and using  
\begin{align} \label{ab}
\<\alpha-\beta, \alpha\>
=\frac12 (|\alpha|^2-|\beta|^2)+\frac12 |\alpha-\beta|^2
\end{align} 
for $\alpha, \beta\in \rr^n$,
we have  
\begin{align*} 
& \frac12 ( |Y^M_{k+1}|^2 - |Y^M_k|^2) 
+  \frac12 |Y^M_{k+1}-Y^M_k|^2  \\
&= \<Y^M_{k+1}, b_{k+1}\> \tau
+\<Y^M_{k+1}-Y^M_k, \sigma_k \delta_k W\>
+\<Y^M_k, \sigma_k \delta_k W\>.
\end{align*}  
By the conditions \eqref{coe}-\eqref{gro-g} and Cauchy--Schwarz inequality, there exists $C=C(\epsilon, |b(0)|,|\sigma(0)|_{HS})$ such that 
\begin{align*} 
& \frac12 ( |Y^M_{k+1}|^2 - |Y^M_k|^2) 
+  \frac12 |Y^M_{k+1}-Y^M_k|^2  \\ 
&\le C (\tau+|\delta_k W|^2) + (L_b+\epsilon) |Y^M_{k+1}|^2 \tau +  \frac12 |Y^M_{k+1}-Y^M_k|^2 \\
& \quad +L_\sigma^2 |Y^M_k|^2 |\delta_k W|^2
+\<Y^M_k, \sigma_k \delta_k W\> 
\end{align*}  
It follows that
\begin{align*}  
& (1-2 (L_b+\epsilon)  \tau) |Y^M_{k+1}|^2
\le C (\tau+|\delta_k W|^2) 
+ (1+2 L_\sigma^2 |\delta_k W|^2) |Y^M_k|^2
+\<Y^M_k, \sigma_k \delta_k W\>,
\end{align*}  
and thus for $k \in \zz_M^*$,
\begin{align*}   
& (1-2 (L_b+\epsilon) \tau) |Y^M_k|^2   \\
& \le C +C \sum_{i=0}^{k-1} |\delta_i W|^2 
+ \sum_{i=0}^{k-1} (2 (L_b+\epsilon) \tau+L_\sigma^2 |\delta_k W|^2) |Y^M_i|^2 +\sum_{i=0}^{k-1} \<Y^M_i, \sigma_i \delta_i W\>.
\end{align*}  
Since $2 L_b \tau<1$, one can choose $\epsilon>0$ such that $1-2(L_b+\epsilon) \tau \ge C_0$ for some $C_0>0$.
Then
\begin{align*}    
& \ee \sup_{k \in \zz_M^*} |Y^M_k|^p
\le C + C \ee \Big(\sum_{i=0}^{M-1} |\delta_i W|^2 \Big)^\frac p2
+ C \ee \Big( \sum_{i=0}^{M-1}(\tau+|\delta_i W|^2)  |Y^M_i|^2\Big)^\frac p2 \\
& \qquad  \qquad  \qquad \quad +C \ee \sup_{k \in \zz_M^*} \Big|\sum_{i=0}^{k-1} \<Y^M_i, \sigma_i \delta_i W\>\Big|^\frac p2.
\end{align*} 
Using the elementary inequality 
\begin{align} \label{sum}
(\sum_{i=0}^{M-1} a_i)^\gamma 
\le M^{\gamma-1} \sum_{i=0}^{M-1} a_i^\gamma,
\quad \forall~\gamma \ge 1, ~ a_i \ge 0,
\end{align} 
% the inverse inequality holds true if $\gamma \in (0,1)$
and the independence between $\delta_i W$ and $Y^M_i$,  we derive   
\begin{align*} 
& \ee \Big(\sum_{i=0}^{M-1} |\delta_i W|^2 \Big)^\frac p2
\le M^{\frac p2-1} \sum_{i=0}^{M-1} \ee |\delta_i W|^p
\le C \\  
& \ee \Big( \sum_{i=0}^{M-1}(\tau+|\delta_i W|^2)  |Y^M_i|^2\Big)^\frac p2
\le \Big( \sum_{i=0}^{M-1} \|(\tau+|\delta_i W|^2)  |Y^M_i|^2\|_{L^\frac p2 (\Omega)} \Big)^\frac p2 \\
% & = \Big( \sum_{i=0}^{M-1} \|(\tau+|\delta_i W|^2)\|_{L^\frac p2 (\Omega)} \| Y^M_i\|^2_{L^p (\Omega)} \Big)^\frac p2 \\
& \le C \tau^\frac p2 \Big( \sum_{i=0}^{M-1} \| Y^M_i\|^2_{L^p (\Omega)} \Big)^\frac p2 
% & \le C \tau^\frac p2 M^{\frac p2-1} \sum_{i=0}^{M-1} \ee | Y^M_i|^p 
\le C \tau \sum_{i=0}^{M-1} \ee | Y^M_i|^p.
\end{align*} 
The same argument, in combination with discrete Burkholder inequality, yields 
\begin{align} \label{dis-BDG}  
& \ee \sup_{k \in \zz_M^*} \Big|\sum_{i=0}^{k-1} \<Y^M_i, \sigma_i \delta_i W\>\Big|^\frac p2
\le C \ee \Big(\sum_{i=0}^{M-1} \<Y^M_i, \sigma_i \delta_i W\>^2 \Big)^\frac p4  
 \le C + C \tau \sum_{i=0}^{M-1} \ee | Y^M_i|^p,
\end{align}  
for any $p \ge 4$.
Combining the above three inequalities implies
\begin{align*}  
\ee \sup_{k \in \zz_M} |Y^M_k|^p 
\le C + \ee |X_0|^p+ C \tau \sum_{i=0}^{M-1} \ee | Y^M_i|^p. 
\end{align*} 
We conclude \eqref{sta-yk} by the above inequality and Gr\"onwall lemma.
 
To show \eqref{sta-int}, we apply It\^o formula to the interpolation process $Y^M_t$, the conditions \eqref{coe}-\eqref{gro-g} and \eqref{A2}, and Burkholder--Davis--Gundy inequality to get  
\begin{align*}
\ee \sup_{t \in [0, T]} |Y^M_t|^p 
& \le \ee |X_0|^p + C \int_0^T \ee |b(Y^M_{\eta^+_M})|^p+ \ee |\sigma(Y^M_{\eta_M})|^p {\rm d}s \\
& \le \ee |X_0|^p + C \big(1 + \sup_{k \in \zz_M} \ee |b(Y^M_k)|^p+ \sup_{k \in \zz_M} \ee |Y^M_k|^p \big).
\end{align*}
Then we get \eqref{sta-int} by the above estimate and the estimate \eqref{sta-yk}. 
\end{proof}

\begin{rk} \label{rk-2}
When $p=2$, as $\ee \sum_{i=0}^{k-1} \<Y^M_i, \sigma_i \delta_i W\>=0$, one do not need to control the stochastic term in Proposition \ref{sta}, so that we can use the same argument in Proposition \ref{sta} to get the following type of second moment's estimation: 
\begin{align*}
\sup_{k \in \zz_M} \ee |Y^M_k|^2 \le C (1+ \ee |X_0|^2),
\end{align*}   
provided \eqref{A1} and \eqref{B1} hold and $\ee |X_0|^2<\infty$.   
\end{rk}

\begin{rk}  
The $p$-moment's estimation with $p \in (2,4)$ is unknown. 
In our argument, to apply \eqref{sum} the restriction $p \ge 4$ is needed in the second inequality of \eqref{dis-BDG}.  
This is the main reason why our results (e.g., Theorems \ref{main1}, \ref{main3}, and \ref{tm-err-spde}) hold true only for $p=2$ and $p \ge 4$.
\end{rk}

\subsection{Main Results}

Now we are in the position to present our main results. 
Our first result is the following representation of the $L^p$-error estimate between the exact solution $\{X_{t_k}:~ k \in \zz_M\}$ of Eq. \eqref{sde}and the backward Euler scheme $\{Y_k^M:~ k \in \zz_M\}$ defined in \eqref{be}.

\begin{tm} \label{main1}
Let Assumption \ref{ap-mon} hold for some $L_b \in \rr$ and $L_\sigma \ge 0$ and 
$\sup_{t \in [0, T]} \ee |b(X_t)|^p+\sup_{t \in [0, T]} \ee |X_t|^p<\infty$ for some $p \ge 4$.
Then for any $M \in \nn_+$ such that $2 L_b \tau<1$, there exists $C$ such that  
\begin{align}  \label{main10}
\ee \sup_{k \in \zz_M} |X_{t_k}-Y_k^M|^p 
\le C \sum_{i=0}^{M-1} \int_{t_i}^{t_{i+1}} \ee |b(X_r)-b(X_{t_{i+1}})|^p + \ee |X_r-X_{t_i}|^p {\rm d}r.
\end{align} 
\end{tm}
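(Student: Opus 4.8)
The plan is to estimate the error process $E_k := X_{t_k} - Y_k^M$ by writing a discrete equation for it and testing against $E_{k+1}$, mimicking the structure of the proof of Proposition \ref{sta}. First I would use the integral form of the exact solution on each subinterval,
\begin{align*}
X_{t_{k+1}} = X_{t_k} + \int_{t_k}^{t_{k+1}} b(X_r)\,{\rm d}r + \int_{t_k}^{t_{k+1}} \sigma(X_r)\,{\rm d}W_r,
\end{align*}
and subtract the scheme \eqref{be} to get
\begin{align*}
E_{k+1} = E_k + \bigl(b(X_{t_{k+1}}) - b(Y_{k+1}^M)\bigr)\tau + \bigl(\sigma(X_{t_k}) - \sigma(Y_k^M)\bigr)\delta_k W + R_k^b + R_k^\sigma,
\end{align*}
where $R_k^b := \int_{t_k}^{t_{k+1}} (b(X_r) - b(X_{t_{k+1}}))\,{\rm d}r$ and $R_k^\sigma := \int_{t_k}^{t_{k+1}} (\sigma(X_r) - \sigma(X_{t_k}))\,{\rm d}W_r$ are the deterministic and stochastic residual terms whose $L^p$-norms are exactly what appears on the right side of \eqref{main10}.

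Next I would test this identity against $E_{k+1}$ using the algebraic identity \eqref{ab}, exactly as in Proposition \ref{sta}. The monotonicity \eqref{A1} controls $\langle b(X_{t_{k+1}}) - b(Y_{k+1}^M), E_{k+1}\rangle \le L_b |E_{k+1}|^2$; the Lipschitz bound \eqref{B1} controls the $\sigma$-difference term; the cross term $\langle E_{k+1} - E_k, (\sigma(X_{t_k})-\sigma(Y_k^M))\delta_k W\rangle$ gets absorbed into the $\frac12 |E_{k+1} - E_k|^2$ coming from \eqref{ab} together with $L_\sigma^2 |E_k|^2 |\delta_k W|^2$; and the term $\langle E_k, (\sigma(X_{t_k})-\sigma(Y_k^M))\delta_k W\rangle$ is a martingale increment to be handled later by the discrete Burkholder inequality. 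The residual terms $R_k^b$ and $R_k^\sigma$ are split off by Cauchy--Schwarz/Young: $\langle E_{k+1}, R_k^b\rangle$ contributes a term controlled by $\tau |E_{k+1}|^2$ plus $\tau^{-1}|R_k^b|^2$ (note $|R_k^b|^2 \le \tau \int_{t_k}^{t_{k+1}} |b(X_r)-b(X_{t_{k+1}})|^2\,{\rm d}r$ by Cauchy--Schwarz, so the $\tau$ cancels), while $R_k^\sigma$ is again an $\FFF_{t_{k+1}}$-but-not-$\FFF_{t_k}$-measurable martingale-type increment whose quadratic variation is $\int_{t_k}^{t_{k+1}} |\sigma(X_r)-\sigma(X_{t_k})|_{HS}^2\,{\rm d}r$.

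After choosing $\epsilon > 0$ small so that $1 - 2(L_b + \epsilon)\tau \ge C_0 > 0$ (possible since $2L_b\tau < 1$), I would sum over $k$, raise to the power $p/2$, take supremum over $k$ and expectation, and apply the elementary inequality \eqref{sum}, the independence of $\delta_i W$ from $\FFF_{t_i}$, and the discrete Burkholder--Davis--Gundy inequality to the two martingale-type sums — exactly the manipulations in \eqref{dis-BDG}, which is where $p \ge 4$ enters. This produces
\begin{align*}
\ee \sup_{k \in \zz_M} |E_k|^p \le C\tau \sum_{i=0}^{M-1} \ee |E_i|^p + C \sum_{i=0}^{M-1} \int_{t_i}^{t_{i+1}} \ee|b(X_r)-b(X_{t_{i+1}})|^p + \ee|X_r - X_{t_i}|^p\,{\rm d}r,
\end{align*}
where I have used $\ee|R_k^\sigma|^p \le C\tau^{p/2-1}\int_{t_k}^{t_{k+1}}\ee|\sigma(X_r)-\sigma(X_{t_k})|_{HS}^p\,{\rm d}r \le C\int_{t_k}^{t_{k+1}}\ee|X_r-X_{t_k}|^p\,{\rm d}r$ via \eqref{B1}, and similarly bounded $\ee|R_k^b|^p$. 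The discrete Gr\"onwall lemma then removes the first term on the right and yields \eqref{main10}, with $E_0 = 0$ killing the initial-data term. The main obstacle is bookkeeping: ensuring every martingale-type increment (both the genuinely stochastic $\sigma$-difference terms and the stochastic residual $R_k^\sigma$) is correctly identified as adapted to the right filtration so that the conditional-expectation/Burkholder arguments apply, and tracking that all the stray $\tau$ and $\delta_k W$ factors combine so that the only surviving contributions are the two integral quantities on the right of \eqref{main10} plus the Gr\"onwall-absorbable $C\tau\sum \ee|E_i|^p$ term; the power-$p/2$ bound on the cross term requires, as in Proposition \ref{sta}, the $p \ge 4$ restriction.
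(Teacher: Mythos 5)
Your proposal is correct and follows essentially the same route as the paper's own proof: the same error equation obtained by subtracting \eqref{be} from the integral form of the exact solution, the same testing against $e_{k+1}$ via the identity \eqref{ab} with the cross terms absorbed into $\frac12|e_{k+1}-e_k|^2$, the same identification of the martingale increments handled by the discrete Burkholder inequality (where $p\ge 4$ enters), and the same concluding Gr\"onwall step. No substantive differences to report.
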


The second result is the following representation of the $L^p$-error estimate between the exact solution $\{X_t:~ t \in [0, T]\}$ of Eq. \eqref{sde}and the interpolation process $\{Y_t^M:~ t \in [0, T]\}$ defined in \eqref{int}.

\begin{tm} \label{main2} 
Under the same conditions as in Theorem \ref{main1}, 
for any $M \in \nn_+$ such that $2 L_b \tau<1$, there exists s constant $C$ such that  
\begin{align}  \label{main20}
\ee \sup_{t \in [0, T]} |X_t-Y_t^M|^p
& \le C \int_0^T \ee |b(Y^M)-b(Y^M_{\eta^+_M})|^p
+ \ee |Y^M-Y^M_{\eta_M}|^p {\rm d} r. 
\end{align} 
\end{tm}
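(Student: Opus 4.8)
The plan is to mimic the proof of the discrete estimate \eqref{main10} in Theorem \ref{main1}, but working directly at the level of the It\^o processes rather than the one-step recursion. Write $e_t := X_t - Y^M_t$. Subtracting the integral form \eqref{int-ito} from the mild/integral form of \eqref{sde} gives
\begin{align*}
e_t = \int_0^t \bigl(b(X_r)-b(Y^M_{\eta^+_M(r)})\bigr)\,{\rm d}r
+ \int_0^t \bigl(\sigma(X_r)-\sigma(Y^M_{\eta_M(r)})\bigr)\,{\rm d}W_r .
\end{align*}
First I would apply the It\^o formula to $|e_t|^p$ (or, to avoid differentiability issues at the origin when $p$ is not an even integer, to $(|e_t|^2+\varepsilon)^{p/2}$ and then let $\varepsilon \downarrow 0$), obtaining a drift term $p|e_r|^{p-2}\langle e_r, b(X_r)-b(Y^M_{\eta^+_M(r)})\rangle\,{\rm d}r$, a second-order term of order $|e_r|^{p-2}\,|\sigma(X_r)-\sigma(Y^M_{\eta_M(r)})|_{HS}^2\,{\rm d}r$ plus a correction of order $|e_r|^{p-4}|e_r^\top(\sigma(X_r)-\sigma(Y^M_{\eta_M(r)}))|^2$, and a martingale term $p|e_r|^{p-2}\langle e_r,(\sigma(X_r)-\sigma(Y^M_{\eta_M(r)}))\,{\rm d}W_r\rangle$.

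The key algebraic manoeuvre is to insert the ``shifted'' arguments so that the monotonicity hypotheses \eqref{A1}, \eqref{B1} can be used against the exact-solution differences, while the time-shift discrepancies are thrown into the right-hand side. Concretely, decompose
\begin{align*}
b(X_r)-b(Y^M_{\eta^+_M(r)})
= \bigl(b(X_r)-b(X_{\eta^+_M(r)})\bigr)
+ \bigl(b(X_{\eta^+_M(r)})-b(Y^M_{\eta^+_M(r)})\bigr),
\end{align*}
and similarly split $\sigma(X_r)-\sigma(Y^M_{\eta_M(r)})$ through $\sigma(X_{\eta_M(r)})$. Then pair the ``monotone'' pieces $\langle e_r, b(X_{\eta^+_M(r)})-b(Y^M_{\eta^+_M(r)})\rangle$ and $|\sigma(X_{\eta_M(r)})-\sigma(Y^M_{\eta_M(r)})|_{HS}^2$ with \eqref{A1} and \eqref{B1}; this is not quite immediate because the natural test vector for \eqref{A1} is $X_{\eta^+_M(r)}-Y^M_{\eta^+_M(r)}=e_{\eta^+_M(r)}$, not $e_r$, so one writes $e_r = e_{\eta^+_M(r)} + (X_r-X_{\eta^+_M(r)}) - (Y^M_r-Y^M_{\eta^+_M(r)})$ and absorbs the extra terms. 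After using Cauchy--Schwarz and Young's inequality to separate variables, the accumulated ``bad'' terms are controlled by (a) powers of $|X_r-X_{\eta^+_M(r)}|$ and $|X_r-X_{\eta_M(r)}|$ and (b) powers of $|Y^M_r-Y^M_{\eta^+_M(r)}|$ and $|Y^M_r-Y^M_{\eta_M(r)}|$; but from \eqref{int} one has $Y^M_r-Y^M_{\eta_M(r)} = b(Y^M_{\eta^+_M(r)})(r-t_k)+\sigma(Y^M_{\eta_M(r)})(W_r-W_{t_k})$, and $Y^M_r - Y^M_{\eta^+_M(r)}$ differs from this only by the full increment $b(Y^M_{\eta^+_M(r)})\tau + \sigma(Y^M_{\eta_M(r)})\delta_k W$, so both can be rewritten using only $b(Y^M)-b(Y^M_{\eta^+_M})$ and $Y^M-Y^M_{\eta_M}$ up to terms already subsumed. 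This is precisely how the right-hand side of \eqref{main20} is obtained: the telescoping replaces the explicit increments $b(Y^M_{\eta^+_M})\tau$, $\sigma(Y^M_{\eta_M})\delta_k W$ by the process differences appearing in the bound.

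For the martingale term I would take $\sup_{t\le T}$, apply the Burkholder--Davis--Gundy inequality to $\ee\sup_{t\le T}|\int_0^t p|e_r|^{p-2}\langle e_r,(\sigma(X_r)-\sigma(Y^M_{\eta_M(r)})){\rm d}W_r\rangle|$, bound the quadratic variation by $\int_0^T |e_r|^{2p-2}|\sigma(X_r)-\sigma(Y^M_{\eta_M(r)})|_{HS}^2\,{\rm d}r$, then split $\sigma$ through $\sigma(X_{\eta_M(r)})$ as above, use \eqref{B1} on the monotone piece, and apply Young's inequality in the form $\sup|e|^{p-1}\cdot(\int|e|^p)^{1/2}\le \tfrac12\epsilon\,\ee\sup|e|^p + C_\epsilon\ee\int|e|^p + (\text{remainder})$ to absorb $\tfrac12\epsilon\,\ee\sup_{t\le T}|e_t|^p$ into the left-hand side. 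What remains, after collecting everything, is
\begin{align*}
\ee\sup_{t\le T}|e_t|^p \le C\int_0^T \ee|e_r|^p\,{\rm d}r
+ C\int_0^T \ee\bigl|b(Y^M_r)-b(Y^M_{\eta^+_M(r)})\bigr|^p
+ \ee\bigl|Y^M_r-Y^M_{\eta_M(r)}\bigr|^p\,{\rm d}r ,
\end{align*}
and \eqref{main20} follows from Gr\"onwall's lemma. The main obstacle I anticipate is bookkeeping: keeping the numerous shift-error terms organized so that each ends up either (i) under the monotone structure, (ii) in the final right-hand side of \eqref{main20}, or (iii) absorbed into $\epsilon\,\ee\sup|e|^p$ or into the Gr\"onwall term $\int\ee|e_r|^p$ — and in particular checking that the additional terms produced by rewriting $Y^M_r-Y^M_{\eta^+_M(r)}$ via \eqref{int} genuinely collapse into $|b(Y^M)-b(Y^M_{\eta^+_M})|^p$ and $|Y^M-Y^M_{\eta_M}|^p$ rather than leaving uncontrolled $p$-th moments of $b(Y^M_{\eta^+_M})$ standing alone (this is where the hypothesis $\sup_k\ee|b(Y^M_k)|^p<\infty$, hence the finiteness of $\ee\sup_t|Y^M_t|^p$ from \eqref{sta-int}, is implicitly needed to justify all the interchanges and the a priori finiteness of $\ee\sup_t|e_t|^p$).
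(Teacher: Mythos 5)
Your overall machinery (It\^o formula applied to $|e_t|^p$, monotonicity for the drift, BDG plus Young to absorb the martingale supremum, Gr\"onwall at the end) matches the paper's proof. The genuine problem is the choice of intermediate point in your decomposition. You split $b(X_r)-b(Y^M_{\eta^+_M(r)})$ through $b(X_{\eta^+_M(r)})$, which produces the residual term $b(X_r)-b(X_{\eta^+_M(r)})$. Under the hypotheses of Theorem \ref{main1} (only the one-sided Lipschitz condition \eqref{A1}; Assumption \ref{ap-b} is \emph{not} assumed here), there is no way to bound $|b(X_r)-b(X_{\eta^+_M(r)})|$ by $|X_r-X_{\eta^+_M(r)}|$, and more importantly this term involves increments of the \emph{exact} solution, whereas the right-hand side of \eqref{main20} contains only increments of the \emph{numerical} interpolation $Y^M$. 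Your rewriting-via-\eqref{int} step only addresses the $Y^M$-increments in your group (b); the $X$-increment and $b(X)$-increment terms in your group (a) would survive into the final estimate, so your argument proves a different representation (one of the flavour of \eqref{main10}), not \eqref{main20}. The same objection applies to your need to shift the test vector from $e_r$ to $e_{\eta^+_M(r)}$, which creates further terms of exactly this uncontrollable type.

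The fix, which is what the paper does, is to decompose through the interpolation process at the \emph{current} time: $b(X_r)-b(Y^M_{\eta^+_M(r)})=\bigl(b(X_r)-b(Y^M_r)\bigr)+\bigl(b(Y^M_r)-b(Y^M_{\eta^+_M(r)})\bigr)$ and $\sigma(X_r)-\sigma(Y^M_{\eta_M(r)})=\bigl(\sigma(X_r)-\sigma(Y^M_r)\bigr)+\bigl(\sigma(Y^M_r)-\sigma(Y^M_{\eta_M(r)})\bigr)$. Then \eqref{A1} applies directly to $\langle e_r, b(X_r)-b(Y^M_r)\rangle\le L_b|e_r|^2$ because $e_r=X_r-Y^M_r$ is exactly the right test vector, \eqref{B1} gives $|\sigma(X_r)-\sigma(Y^M_r)|_{HS}\le L_\sigma|e_r|$ and $|\sigma(Y^M_r)-\sigma(Y^M_{\eta_M(r)})|_{HS}\le L_\sigma|Y^M_r-Y^M_{\eta_M(r)}|$, and the only residual terms are precisely $|b(Y^M)-b(Y^M_{\eta^+_M})|$ and $|Y^M-Y^M_{\eta_M}|$, i.e.\ the integrands in \eqref{main20}; no further manipulation via \eqref{int} is needed at this stage. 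With that decomposition the rest of your argument (Young's inequality to separate variables, BDG and the absorption of $\tfrac12\ee\sup_t|e_t|^p$, then Gr\"onwall) goes through as you describe.
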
 

Here and after, we hide the corresponding integrated variable for simplicity.
With the following polynomial growth conditions on the drift function $b$, we have the same $L^p$-convergence rates between the BE scheme \eqref{be} or the interpolation process \eqref{int} and the exact solution of Eq. \eqref{sde} as the Lipschitz case.

\begin{ap} \label{ap-b} 
There exist $\widetilde{L_b} \ge 0$ and $q \ge 1$ such that for $x,y\in \rr^n$, 
\begin{align} \label{A2}\tag{A2} 
|b(x)-b(y)| \le \widetilde{L_b} (1+|x|^{q-1}+|y|^{q-1}) |x-y|.  
\end{align}    
\end{ap}

\begin{rk} 
A motivating example of $b$ such that \eqref{A1} and \eqref{A2} hold true is a polynomial of odd order $q$ with a negative leading coefficient, perturbed with a Lipschitz continuous function.
\end{rk}

\begin{tm}  \label{main3}
Let Assumptions \ref{ap-mon} and \ref{ap-b} hold for some $L_b \in \rr$, $\widetilde{L_b}, L_\sigma \ge 0$, and $q \ge 1$.
Assume that $\ee |X_0|^{p (2q-1)}<\infty$ for some $p \ge 4$, then for any $M \in \nn_+$ such that $2 L_b \tau<1$, there exists $C$ such that   
\begin{align}  \label{err-sde}
\ee \sup_{k \in \zz_M} |X_{t_k}-Y_k^M|^p 
\le C (1+\ee|X_0|^{p (2q-1)}) \tau^\frac p2.
\end{align}  
Assume that $\ee |X_0|^{pq (2q-1)}<\infty$ for some $p \ge 4$, then for any $M \in \nn_+$ such that $2 L_b \tau<1$, there exists $C$ such that   
\begin{align}  \label{err-sde+}
\ee \sup_{t \in [0, T]} |X_t-Y_t^M|^p 
\le C (1+\ee|X_0|^{pq (2q-1)}) \tau^\frac p2. 
\end{align}  
\end{tm}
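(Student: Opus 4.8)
The plan is to combine the $L^p$-error representations of Theorems~\ref{main1} and \ref{main2} with the moment stability estimates of Proposition~\ref{sta} and a standard regularity estimate for the It\^o process. First I would invoke the stability of the exact solution: from \eqref{sta-sde} together with the polynomial growth \eqref{A2}, one gets $\sup_{t\in[0,T]}\ee|b(X_t)|^p\le C(1+\ee|X_0|^{pq})<\infty$ whenever $\ee|X_0|^{pq}<\infty$, so the hypotheses of Theorems~\ref{main1} and \ref{main2} are met (note $pq\le p(2q-1)$ since $q\ge1$). Then the right-hand side of \eqref{main10} must be controlled: split $\ee|b(X_r)-b(X_{t_{i+1}})|^p$ using \eqref{A2} and H\"older's inequality into $\|1+|X_r|^{q-1}+|X_{t_{i+1}}|^{q-1}\|_{L^{2p}(\Omega)}^p\,\|X_r-X_{t_{i+1}}\|_{L^{2p}(\Omega)}^p$, and bound the first factor by $C(1+\ee|X_0|^{2p(q-1)})$ via \eqref{sta-sde}. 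The remaining ingredient is the temporal H\"older regularity of $X$: from the SDE \eqref{sde}, the BDG inequality, and the moment bounds on $b(X)$ and $\sigma(X)$, one has $\ee|X_r-X_{t_i}|^{2p}\le C(1+\ee|X_0|^{2pq})|r-t_i|^p$, and similarly $\ee|X_r-X_{t_{i+1}}|^{2p}\le C(1+\ee|X_0|^{2pq})|r-t_{i+1}|^p$. Substituting these bounds, the sum $\sum_{i=0}^{M-1}\int_{t_i}^{t_{i+1}}(\cdots)\,{\rm d}r$ is of order $\tau^{p/2}\cdot M\tau = T\tau^{p/2}$, with the $X_0$-moment requirement becoming $\ee|X_0|^{2pq(q-1)}$ from the product of H\"older factors; a slightly more careful bookkeeping (using $\|X_r-X_{t_{i+1}}\|_{L^{2p}}^p$ against $\|1+|X|^{q-1}\|_{L^{2p}}^p$, each raised appropriately) yields the stated exponent $p(2q-1)$, giving \eqref{err-sde}.

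For \eqref{err-sde+}, I would start from \eqref{main20}. Here the increments are of the numerical interpolation process rather than the exact solution, so one must first establish that $\sup_{k\in\zz_M}\ee|b(Y^M_k)|^p<\infty$: this follows from \eqref{A2}, \eqref{sta-yk}, and H\"older, yielding $\sup_k\ee|b(Y^M_k)|^p\le C(1+\ee|X_0|^{pq})$, which then unlocks the second part of Proposition~\ref{sta}, i.e., $\ee\sup_{t\in[0,T]}|Y^M_t|^p\le C(1+\ee|X_0|^{pq})$. Next, using the It\^o representation \eqref{int-ito}, for $t\in[t_k,t_{k+1})$ the increment $Y^M_t-Y^M_{\eta_M(t)}=Y^M_t-Y^M_k=b(Y^M_{k+1})(t-t_k)+\sigma(Y^M_k)(W_t-W_{t_k})$, so by BDG, $\ee|Y^M_t-Y^M_{\eta_M}|^{p'}\le C\,\tau^{p'/2}(1+\sup_k\ee|b(Y^M_k)|^{p'}+\sup_k\ee|Y^M_k|^{p'})$ for a suitable exponent $p'$. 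Likewise $\ee|Y^M_t-Y^M_{\eta^+_M}|^{p'}\le C\tau^{p'/2}(\cdots)$ since $Y^M_t-Y^M_{k+1}=-b(Y^M_{k+1})(t_{k+1}-t)+\sigma(Y^M_k)(W_t-W_{t_{k+1}})$ and the Wiener increment term again contributes $\tau^{1/2}$. Then $\ee|b(Y^M)-b(Y^M_{\eta^+_M})|^p$ is handled by \eqref{A2} and H\"older, splitting into the $L^{2p}$-norm of $1+|Y^M|^{q-1}+|Y^M_{\eta^+_M}|^{q-1}$ (bounded using the interpolation-process moment bound, which needs $\ee|X_0|^{2p(q-1)q}$-type control since that bound already costs an extra power $q$) times the $L^{2p}$-norm of the increment $Y^M-Y^M_{\eta^+_M}$ (of order $\tau^{1/2}$). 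Summing $\int_0^T(\cdots)\,{\rm d}r$ gives order $\tau^{p/2}$ with the $X_0$-moment requirement $\ee|X_0|^{pq(2q-1)}$, establishing \eqref{err-sde+}.

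The main obstacle I anticipate is the careful tracking of which $L^r(\Omega)$-norms and which powers of $\ee|X_0|$ appear, so that the final moment exponents come out exactly as $p(2q-1)$ and $pq(2q-1)$ rather than something larger. The source of the discrepancy between the two cases is precisely that the interpolation-process moment bound \eqref{sta-int} (equivalently its consequence via \eqref{A2}) costs one extra factor of $q$ compared with the discrete bound \eqref{sta-yk}, because controlling $\ee|b(Y^M_k)|^p$ already requires $\ee|Y^M_k|^{pq}$, hence $\ee|X_0|^{pq}$, and this then propagates through the nonlinear estimate of the increments of $b(Y^M)$. A secondary technical point is ensuring the H\"older exponents in the product estimate $\ee|b(x)-b(y)|^p\le\widetilde{L_b}^p\,\|1+|x|^{q-1}+|y|^{q-1}\|^p_{L^{2p}}\|x-y\|^p_{L^{2p}}$ are chosen so that the $\tau^{1/2}$ from each temporal/Wiener increment is preserved at the $L^{2p}$ level (this is where one uses BDG at exponent $2p$ and the moment bounds at exponent $2pq$), after which the final summation over $i$ contributes only the harmless factor $M\tau=T$. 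None of these steps is conceptually hard given Proposition~\ref{sta} and Theorems~\ref{main1}--\ref{main2}; the work is entirely in the bookkeeping of exponents.
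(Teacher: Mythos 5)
Your proposal follows essentially the same route as the paper: the error representations of Theorems \ref{main1}--\ref{main2}, the moment bounds of Proposition \ref{sta}, the temporal H\"older estimate for $X$ (the paper's Lemma \ref{lm-hol-x}), and the one-step increment bounds for $Y^M$ read off from \eqref{yt}--\eqref{yt+} (Lemma \ref{lm-hol-yt}). The single detail you defer --- obtaining $p(2q-1)$ rather than the $2pq$ your symmetric $L^{2p}$--$L^{2p}$ split would give --- is resolved exactly as you suspect, by using instead the conjugate pair $\tfrac{2q-1}{q-1}$ and $\tfrac{2q-1}{q}$, so that the polynomial weight is measured in $L^{p(2q-1)/(q-1)}(\Omega)$ and the increment in $L^{p(2q-1)/q}(\Omega)$, each costing precisely $\ee|X_0|^{p(2q-1)}$ via \eqref{sta-sde} and Lemma \ref{lm-hol-x}; the same split applied to the $Y^M$-moments (where bounding $\sup_k\ee|b(Y^M_k)|^{p(2q-1)}$ already costs $\ee|X_0|^{pq(2q-1)}$) produces the extra factor $q$ in \eqref{err-sde+}.
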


\begin{rk}
As pointed out in Remark \ref{rk-2}, the error representations \eqref{main10}-\eqref{main20} and the error estimates \eqref{err-sde}-\eqref{err-sde+} are also valid when $p=2$ under the $l^\infty(\zz_M; L^2(\Omega))$-norm and the $L^\infty(0, T; L^2(\Omega))$-norm, respectively.
The estimate \eqref{err-sde} with $p=2$ under the $l^\infty(\zz_M; L^2(\Omega))$-norm had been shown firstly in \cite{HMS02(SINUM)}; see also \cite{WWD20(BIT)} where the authors used another type of representations for $\sup_{k \in \zz_M} \ee |X_{t_k}-Y_k^M|^2$.
\end{rk}

\subsection{Possible Generalization to SODEs with Exponential Growth Coefficients}

The purpose of this part is to give possible generalization to SODEs with exponential growth drifts for some $\widetilde{L_b}, \widehat{L_b} \ge 0$:
\begin{align} \label{exp} 
|b(x)-b(y)| \le \widetilde{L_b} (1+e^{\widehat{L_b} |x|}+e^{\widehat{L_b}|y|}) |x-y|,  \quad x, y \in \rr^n.
\end{align}   
A motivating example is $b(x)=e^{-x}$, $x \in \rr$, perturbed by an odd polynomial with a negative leading coefficient and a Lipschitz function. 

Using the representation \eqref{main10} and the condition \eqref{exp}, we have
\begin{align*}  
& \ee \sup_{k \in \zz_M} |X_{t_k}-Y_k^M|^p \\ 
& \le C (1+ \sup_{t \in [0, T]} \ee e^{2p \widehat{L_b}  |X_r|})^\frac12 
\sum_{i=0}^{M-1} \int_{t_i}^{t_{i+1}}  (\ee |X_r-X_{t_i}|^{2p}+\ee |X_r-X_{t_{i+1}}|^{2p})^\frac12 {\rm d} r \\   
& \le C (1+ \sup_{t \in [0, T]} \ee e^{2p \widehat{L_b} |X_r|}) \tau^\frac p2.
\end{align*} 
It is not clear whether one can apply the exponential integrability criteria in \cite{CHL17(JDE), CHLZ19(JDE)} and \cite{HJ20(AOP)} to derive the boundedness of  
$\sup_{t \in [0, T]} \ee e^{2p \widehat{L_b} |X_r|}$ depending on certain exponential moments of $X_0$.
Once this boundedness is established, we obtain the $L^p$-error estimate:
\begin{align*}  
\ee \sup_{k \in \zz_M} |X_{t_k}-Y_k^M|^p 
\le C \tau^\frac p2.
\end{align*}

\section{$L^p$-Convergence Rate of BE Schemes of Monotone SODEs}
\label{sec3}

\subsection{Proof of Theorem \ref{main1}}

For $k \in \zz_M$, denote $e_k:=X_{t_k}-Y_k^M$. 
It is clear that for all $k \in \zz_{M-1}$,
\begin{align*}
X_{t_{k+1}} =X_{t_k}+\int_{t_k}^{t_{k+1}} b(X_r) {\rm d}r + \int_{t_k}^{t_{k+1}} \sigma(X_r) {\rm d}W_r. 
\end{align*}  
The above equality and \eqref{be} yield that 
\begin{align*}
e_{k+1}-e_k & =\int_{t_k}^{t_{k+1}} b(X_r)-b(X_{t_{k+1}}) {\rm d}r + \int_{t_k}^{t_{k+1}} \sigma(X_r)-\sigma(X_{t_k}) {\rm d}W_r \\
& \quad +(b(X_{t_{k+1}})-b(Y_{k+1}^M)) \tau 
+(\sigma(X_{t_k})-\sigma(Y_k^M)) \delta_k W. 
\end{align*} 

Testing $e_{k+1}$ and using the identity \eqref{ab},
we have   
\begin{align} \label{eq-err}
& \frac12 ( |e_{k+1}|^2 - |e_k|^2) 
+  \frac12 |e_{k+1}-e_k|^2  \nonumber \\
&= \<e_{k+1}, \int_{t_k}^{t_{k+1}} b(X_r)-b(X_{t_{k+1}}) {\rm d}r\> 
+ \<e_{k+1}, b(X_{t_{k+1}})-b(Y_{k+1}^M)\> \tau  \nonumber \\
& \quad + \<e_{k+1}, \int_{t_k}^{t_{k+1}} \sigma(X_r)-\sigma(X_{t_k}) {\rm d}W_r\>
+ \<e_{k+1}, (\sigma(X_{t_k})-\sigma(Y_k^M)) \delta_k W\>.
\end{align}  
By Cauchy--Schwarz inequality and the condition \eqref{A1}, we get 
\begin{align*}
& \<e_{k+1}, \int_{t_k}^{t_{k+1}} b(X_r)-b(X_{t_{k+1}}) {\rm d}r\> 
+ \<e_{k+1}, b(X_{t_{k+1}})-b(Y_{k+1}^M)\> \tau \\
& \le (L_b+\epsilon) \tau|e_{k+1}|^2 + \frac1{4 \epsilon} \int_{t_k}^{t_{k+1}} |b(X_r)-b(X_{t_{k+1}})|^2 {\rm d}r, \quad \forall ~ \epsilon>0, \\
& \<e_{k+1}, \int_{t_k}^{t_{k+1}} \sigma(X_r)-\sigma(X_{t_k}) {\rm d}W_r\>
+ \<e_{k+1}, (\sigma(X_{t_k})-\sigma(Y_k^M)) \delta_k W\> \\
& = \<e_{k+1}-e_k, \int_{t_k}^{t_{k+1}} \sigma(X_r)-\sigma(X_{t_k}) {\rm d}W_r\> +\<e_k, \int_{t_k}^{t_{k+1}} \sigma(X_r)-\sigma(X_{t_k}) {\rm d}W_r\> \\
& \quad + \<e_{k+1}-e_k, (\sigma(X_{t_k})-\sigma(Y_k^M)) \delta_k W\>
+ \<e_k, (\sigma(X_{t_k})-\sigma(Y_k^M)) \delta_k W\> \\
& \le \frac12 |e_{k+1}-e_k|^2 + \Big|\int_{t_k}^{t_{k+1}} \sigma(X_r)-\sigma(X_{t_k}) {\rm d}W_r \Big|^2 
+ L_\sigma^2 |e_k|^2 |\delta_k W|^2 \\
& \quad + \<e_k, \int_{t_k}^{t_{k+1}} \sigma(X_r)-\sigma(X_{t_k}) {\rm d}W_r\>
+\<e_k, (\sigma(X_{t_k})-\sigma(Y_k^M)) \delta_k W\>.
\end{align*} 
Substituting the above inequalities into \eqref{eq-err}, we obtain
\begin{align*} 
& (1-2(L_b+\epsilon) \tau) |e_{k+1}|^2 - (1+2 L_\sigma^2 |\delta_k W|^2)|e_k|^2   \\
& \le \frac1{2 \epsilon} \int_{t_k}^{t_{k+1}} |b(X_r)-b(X_{t_{k+1}})|^2 {\rm d}r   + 2 \Big|\int_{t_k}^{t_{k+1}} \sigma(X_r)-\sigma(X_{t_k}) {\rm d}W_r \Big|^2  + 2 R_k,
\end{align*}  
where $R_k:=\<e_k, \int_{t_k}^{t_{k+1}} \sigma(X_r)-\sigma(X_{t_k}) {\rm d}W_r\>+\<e_k, (\sigma(X_{t_k})-\sigma(Y_k^M)) \delta_k W\>$.
Then for $k=1,\cdots,M$,
\begin{align} \label{ek}
& (1-2(L_b+\epsilon) \tau) |e_k|^2 
\le 2 \sum_{i=0}^{k-1} ((L_b+\epsilon)\tau +L_\sigma^2 |\delta_i W|^2) |e_i|^2
+ 2 \sum_{i=0}^{k-1} R_i \nonumber \\
& + \frac1{2 \epsilon} \sum_{i=0}^{k-1} \int_{t_i}^{t_{i+1}} |b(X_r)-b(X_{t_{i+1}})|^2 {\rm d}r
+ 2 \sum_{i=0}^{k-1} \Big|\int_{t_k}^{t_{k+1}} \sigma(X_r)-\sigma(X_{t_k}) {\rm d}W_r \Big|^2. 
\end{align}
 
For any $M \in \nn_+$ such that $2 L_b \tau<1$, one can choose $\epsilon>0$ such that $1-2(L_b+\epsilon) \tau \ge C_0$ for some $C_0>0$.
For any $p \ge 2$, the inequality \eqref{ek} yields the existence of $C=C(\epsilon, p)$ such that  
\begin{align*}
& \ee \sup_{k \in \zz_M^*} |e_k|^p  \\
& \le C \ee \Big(\sum_{i=0}^{M-1} (\tau + |\delta_i W|^2) |e_i|^2\Big)^\frac p2
+ C \ee \Big(\sum_{i=0}^{M-1} \int_{t_i}^{t_{i+1}} |b(X_r)-b(X_{t_{i+1}})|^2 {\rm d}r\Big)^\frac p2   \\
& \quad + C \ee \Big(\sum_{i=0}^{M-1} \Big|\int_{t_k}^{t_{k+1}} \sigma(X_r)-\sigma(X_{t_k}) {\rm d}W_r \Big|^2\Big)^\frac p2
+ C \ee \sup_{k \in \zz_M^*} \Big|\sum_{i=0}^{k-1} R_i\Big|^\frac p2. 
\end{align*}
Using the inequality \eqref{sum}, the independence between $\delta_i W$ and $e_i$, H\"older and Burkholder--Davis--Gundy inequalities, and the condition \eqref{A1}, we derive
\begin{align} \label{err-ek}
\ee \sup_{k \in \zz_M^*} |e_k|^p  
& \le C \tau \sum_{i=0}^{M-1} \ee |e_i|^p
+ C \sum_{i=0}^{M-1} \int_{t_i}^{t_{i+1}} \ee|b(X_r)-b(X_{t_{i+1}})|^p {\rm d}r \nonumber \\
& \quad + C \sum_{i=0}^{M-1} \int_{t_k}^{t_{k+1}} \ee|X_r-X_{t_k}|^p {\rm d}r
+ C \ee \sup_{k \in \zz_M^*} \Big|\sum_{i=0}^{k-1} R_i\Big|^\frac p2. 
\end{align}
Finally, since $e_k$ is independent of  $\int_{t_k}^{t_{k+1}} \sigma(X_r)-\sigma(X_{t_k}) {\rm d}W_r$ and $(\sigma(X_{t_k})-\sigma(Y_k^M)) \delta_k W$, we have by using discrete Burkholder inequality as in \eqref{dis-BDG} with $p \ge 4$ that 
\begin{align*}
\ee \sup_{k \in \zz_M^*} \Big|\sum_{i=0}^{k-1} R_i \Big|^\frac p2 
% & \le C \ee \Big(\sum_{i=0}^{M-1} \int_{t_i}^{t_{i+1}} |e_i|^2 (|\sigma(X_r)-\sigma(X_{t_i})|_{HS}^2 +|\sigma(X_{t_i})-\sigma(Y_i^M)|_{HS}^2) {\rm d}r \Big)^\frac p4\\
% & \le C \sum_{i=0}^{M-1} \int_{t_i}^{t_{i+1}} (\ee |e_i|^p)^\frac12 (\ee |X_r-X_{t_i}|^p)^\frac12 +\ee |e_i|^p {\rm d}r \\
\le C \tau \sum_{i=0}^{M-1} \ee |e_i|^p
+ C \sum_{i=0}^{M-1} \int_{t_i}^{t_{i+1}} \ee |X_r-X_{t_i}|^p {\rm d}r.
\end{align*} 
The above two inequalities, in combination with Gr\"onwall inequality, complete the proof of Theorem \ref{main1}.

\subsection{Proof of Theorem \ref{main2}}

For $t \in [0, T]$, denote $e_t:=X_t-Y_t^M$.
It follows from Eq. \eqref{sde} and \eqref{int-ito} that 
\begin{align*}
{\rm d}e_t & = (b(X_t)-b(Y^M_{\eta^+_M(t)}) {\rm d}t + (\sigma(X_t)-\sigma(Y^M_{\eta_M(t)}) {\rm d}W_t. 
\end{align*} 

Applying It\^o formula, we have
\begin{align*} 
|e_t|^p
& =p \int_0^t |e|^{p-2} \<e, b(X)-b(Y^M)\> {\rm d} r 
+ p \int_0^t |e|^{p-2} \<e, b(Y^M)-b(Y^M_{\eta^+_M})\> {\rm d} r \\
& \quad + \frac12 p(p-1) \int_0^t |e|^{p-2} |\sigma(X)- \sigma(Y^M)
+ \sigma(Y^M)-\sigma(Y^M_{\eta_M})|_{HS}^2 {\rm d}r \\
& \quad + p \int_0^t |e|^{p-2} \<e, \sigma(X)-\sigma(Y^M)
+\sigma(Y^M)-\sigma(Y^M_{\eta_M}) {\rm d}W\>  \\
& \le p \int_0^t |e|^{p-2} [\<e, b(X)-b(Y^M)\>+(p-1) |\sigma(X)- \sigma(Y^M)|_{HS}^2]{\rm d} r \\ 
& \quad + p \int_0^t |e|^{p-2} [\<e, b(Y^M)-b(Y^M_{\eta^+_M})\>+ (p-1) |\sigma(Y^M)-\sigma(Y^M_{\eta_M})|_{HS}^2] {\rm d}r \\  
& \quad + p \int_0^t |e|^{p-2} \<e, \sigma(X)-\sigma(Y^M) 
+\sigma(Y^M)-\sigma(Y^M_{\eta_M}) {\rm d}W\>.
\end{align*} 
By (A1), 
\begin{align*} 
& p \int_0^t |e|^{p-2} [\<e, b(X)-b(Y^M)\>+(p-1) |\sigma(X)- \sigma(Y^M)|_{HS}^2]{\rm d} r \\
& \le p(L_b+(p-1) L_\sigma^2) \int_0^t |e|^p {\rm d} r. 
\end{align*} 
By Young inequality, for any $\epsilon>0$, there exists $C_\epsilon$ such that 
\begin{align*}  
& p \int_0^t |e|^{p-2} [\<e, b(Y^M)-b(Y^M_{\eta^+_M})\>+ (p-1) |\sigma(Y^M)-\sigma(Y^M_{\eta_M})|_{HS}^2] {\rm d}r \\
& \le p \epsilon \int_0^t |e|^p {\rm d} r
+ C_\epsilon \int_0^t |b(Y^M)-b(Y^M_{\eta^+_M})|^p
+|Y^M-Y^M_{\eta_M}|^p {\rm d} r.
\end{align*} 
Therefore,
\begin{align*} 
\ee \sup_{t \in [0, T]} |e_t|^p
& \le p(L_b+(p-1) L_\sigma^2+\epsilon) \int_0^t \ee|e|^p {\rm d} r \\ 
& \quad + C_\epsilon \int_0^t \ee|b(Y^M)-b(Y^M_{\eta^+_M})|^p
+ \ee |Y^M-Y^M_{\eta_M}|^p {\rm d} r \\  
& \quad + \ee \sup_{t \in [0, T]} \Big| p \int_0^t |e|^{p-2} \<e, \sigma(X)-\sigma(Y^M)+\sigma(Y^M)-\sigma(Y^M_{\eta_M}) {\rm d}W\> \Big|.
\end{align*}

By Burkholder--Davis--Gundy and Young inequalities, % (with $C_p=6^{p-1} (p-1)^{p-1}/p$),  
\begin{align*} 
& \ee \sup_{t \in [0, T]} \Big| p \int_0^t |e|^{p-2} \<e, \sigma(X)-\sigma(Y^M)+\sigma(Y^M)-\sigma(Y^M_{\eta_M}) {\rm d}W\> \Big|  \\
% & \le 3p \ee \Big(\int_0^T |(\sigma(X)-\sigma(Y^M)+\sigma(Y^M)-\sigma(Y^M_{\eta_M}))^* |e|^{p-2} e|^2 {\rm d}r\Big)^\frac12 \\
& \le 3p \ee \Big(\int_0^T |\sigma(X)-\sigma(Y^M)+\sigma(Y^M)-\sigma(Y^M_{\eta_M})|_{HS}^2 |e|^{2(p-1)} {\rm d}r\Big)^\frac12 \\
& \le 3p \ee \Big[ \sup_{t \in [0,T]}|e_t|^{p-1}
\Big( \int_0^T |\sigma(X)-\sigma(Y^M)+\sigma(Y^M)-\sigma(Y^M_{\eta_M})|_{HS}^2  {\rm d}r\Big)^\frac12 \Big]\\
& \le \frac12 \ee \sup_{t \in [0,T]}|e_t|^p
+ 3p C_p \ee \Big( \int_0^T |\sigma(X)-\sigma(Y^M)+\sigma(Y^M)-\sigma(Y^M_{\eta_M})|_{HS}^2  {\rm d}r\Big)^\frac p2 \\
& \le \frac12 \ee \sup_{t \in [0,T]}|e_t|^p
+ 3p C_p 2^{p-1} \ee \Big( \int_0^T |\sigma(X)-\sigma(Y^M)|_{HS}^2  {\rm d}r\Big)^\frac p2 \\
& \quad + 3p C_p 2^{p-1} \ee \Big( \int_0^T |\sigma(Y^M)-\sigma(Y^M_{\eta_M})|_{HS}^2  {\rm d}r\Big)^\frac p2.
\end{align*} 
It follows that 
\begin{align*} 
& \ee |e_t|^p
\le C \int_0^t \ee|e|^p {\rm d} r
+ C \int_0^t \ee|b(Y^M)-b(Y^M_{\eta^+_M})|^p
+ \ee |Y^M-Y^M_{\eta_M}|^p {\rm d} r, 
\end{align*} 
from which we conclude Theorem \ref{main2} by Gr\"onwall inequality.

\subsection{Proof of Theorem \ref{main3}}

We begin with the following H\"older continuity of the exact solution in the whole time interval and moment's increment estimate of the interpolation process in subintervals.

\begin{lm}\label{lm-hol-x}
Let Assumptions \ref{ap-mon} and \ref{ap-b} hold for some $L_b \in \rr$, $\widetilde{L_b}, L_\sigma \ge 0$, and $q \ge 1$.
Assume that $\ee |X_0|^{p q}<\infty$ for some $p \ge 2$, then there exists s constant $C$ such that 
\begin{align} \label{hol-x}
\ee |X_t-X_s|^p 
\le C  (1+ \ee |X_0|^{p q}) |t-s|^{p/2},
\quad t, s \in [0, T].
\end{align} 
\end{lm}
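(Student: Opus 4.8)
The plan is to write $X_t-X_s$ in integral form via Eq. \eqref{sde}, estimate the drift and diffusion contributions separately, and reduce everything to the a priori moment bound \eqref{sta-sde}. Concretely, for $0\le s\le t\le T$ one has
\[
X_t-X_s=\int_s^t b(X_r)\,{\rm d}r+\int_s^t\sigma(X_r)\,{\rm d}W_r,
\]
so that $\ee|X_t-X_s|^p\le 2^{p-1}\ee\big|\int_s^t b(X_r)\,{\rm d}r\big|^p+2^{p-1}\ee\big|\int_s^t\sigma(X_r)\,{\rm d}W_r\big|^p$. For the first term I would apply Jensen's (H\"older's) inequality in $r$ to obtain $\ee\big|\int_s^t b(X_r)\,{\rm d}r\big|^p\le |t-s|^{p-1}\int_s^t\ee|b(X_r)|^p\,{\rm d}r$; for the second, Burkholder--Davis--Gundy followed again by Jensen in $r$ (legitimate since $p/2\ge1$) gives $\ee\big|\int_s^t\sigma(X_r)\,{\rm d}W_r\big|^p\le C_p|t-s|^{p/2-1}\int_s^t\ee|\sigma(X_r)|_{HS}^p\,{\rm d}r$.

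Next I would turn these into moment bounds on $X$ itself. Taking $y=0$ in \eqref{A2} yields the polynomial growth estimate $|b(x)|\le |b(0)|+\widetilde{L_b}(1+|x|^{q-1})|x|\le C(1+|x|^q)$, hence $\ee|b(X_r)|^p\le C(1+\ee\sup_{u\in[0,T]}|X_u|^{pq})$; likewise \eqref{gro-g} (a consequence of \eqref{B1}) gives $\ee|\sigma(X_r)|_{HS}^p\le C(1+\ee\sup_{u\in[0,T]}|X_u|^p)$. Since $\ee|X_0|^{pq}<\infty$ and $pq\ge p\ge2$, the estimate \eqref{sta-sde}, applied with exponent $pq$, yields $\ee\sup_{u\in[0,T]}|X_u|^{pq}\le C(1+\ee|X_0|^{pq})$, and in particular the same right-hand side dominates $\ee\sup_{u\in[0,T]}|X_u|^p$.

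Combining, the drift contribution is bounded by $C(1+\ee|X_0|^{pq})|t-s|^p$ and the diffusion contribution by $C(1+\ee|X_0|^{pq})|t-s|^{p/2}$; since $|t-s|\le T$, the extra factor $|t-s|^{p/2}$ in the drift bound is absorbed into $C$, which gives \eqref{hol-x}. Along the way one should record that $C$ may depend on $T,p,q,L_\sigma,\widetilde{L_b},|b(0)|,|\sigma(0)|_{HS}$ but not on $M$ nor on $s,t$.

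I do not anticipate a genuine obstacle. The only point needing care is bookkeeping of moments: because $b$ grows like $|x|^q$, the $p$-th moment of $b(X)$ involves the $pq$-th moment of $X$, which is exactly why the hypothesis is stated as $\ee|X_0|^{pq}<\infty$ and why the constant in \eqref{hol-x} is allowed to depend on $\ee|X_0|^{pq}$ rather than on $\ee|X_0|^p$. (For $p=2$ one could instead invoke the $L^2$-type estimate noted in Remark \ref{rk-2} together with It\^o's formula, but the computation above already covers $p\ge2$ uniformly.)
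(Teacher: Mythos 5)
Your proof is correct and follows essentially the same route as the paper's: the integral representation of $X_t-X_s$, H\"older for the drift term, Burkholder--Davis--Gundy plus H\"older for the stochastic term, the polynomial growth of $b$ from \eqref{A2} and linear growth of $\sigma$ from \eqref{gro-g}, and finally the moment bound \eqref{sta-sde} applied with exponent $pq$. Your bookkeeping of why the $pq$-th moment of $X_0$ is the right hypothesis matches the paper's reasoning exactly.
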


\begin{proof} 
Without loss of generality, we assume $0 \le s \le t \le T$.
Then  
\begin{align*}
X_t-X_s =\int_s^t b(X) {\rm d}r + \int_s^t \sigma(X) {\rm d}W. 
\end{align*}  
Using Burkholder--Davis--Gundy and H\"older inequalities and the conditions \eqref{coe}-\eqref{gro-g} and \eqref{A2}, we get  
\begin{align*}
\ee |X_t-X_s|^p 
% & \le C \ee \Big|\int_s^t b(X) {\rm d}r\Big|^p + C \ee \Big|\int_s^t \sigma(X) {\rm d}W\Big|^p \\
& \le C (t-s)^{p-1} \int_s^t \ee |b(X)|^p {\rm d}r  
+ C (t-s)^{\frac p2-1} \int_s^t \ee|\sigma(X)|_{HS}^p {\rm d}W  \\
& \le C (t-s)^p \sup_{t \in [0, T]}\ee |b(X_t)|^p  
+ C (t-s)^\frac p2 \sup_{t \in [0, T]} \ee|\sigma(X_t)|_{HS}^p  \\
& \le C (t-s)^\frac p2 (1+\sup_{t \in [0, T]} \ee|X_t|^{p q}).
\end{align*}
Then we get \eqref{hol-x} by the above estimate and the estimate \eqref{sta-sde}.
\end{proof} 

By \eqref{be} and \eqref{int}, for any $t \in [0, T]$, 
\begin{align} 
Y^M_t-Y^M_{\eta_M(t)} 
& = b(Y^M_{\eta^+_M(t)})(t-\eta_M(t)) 
+ \sigma(Y^M_{\eta_M(t)}) (W_t-W_{\eta_M(t)}), \label{yt} \\
Y^M_t-Y^M_{\eta^+_M(t)} 
& =b(Y^M_{\eta^+_M(t)})(\eta^+_M(t)-t) 
+ \sigma(Y^M_{\eta_M(t)}) (W_{\eta^+_M(t)}-W_t). \label{yt+}
\end{align}

\begin{lm}\label{lm-hol-yt}
Let Assumptions \ref{ap-mon} and \ref{ap-b} hold for some $L_b \in \rr$, $\widetilde{L_b}, L_\sigma \ge 0$, and $q \ge 1$.
Assume that $\ee |X_0|^{p q}<\infty$ for some $p \ge 4$, then for any $M \in \nn_+$ such that $L_b \tau<1$, there exists $C$ such that for any $t \in [0, T]$, 
\begin{align} 
\ee |Y^M_t-Y^M_{\eta_M(t)}|^p 
% = \ee |b(Y^M_{\eta^+_M(t)})(s-\eta_M(t)) + \sigma(Y^M_{\eta_M(t)}) (W_t-W_{\eta_M(t)})|^p \\
% & \le 2^{p-1} (1+ \ee |b(Y^M_{\eta_M^+(t)})|^p + \ee |\sigma(Y^M_{\eta_M(t)})|^p) (|s-\eta_M(t)|^p + \ee|W_t-W_{\eta_M(t)}|^p) \\
& \le C  (1+ \ee |X_0|^{p q}) \tau^{p/2}, \label{err-yt} \\
\ee |Y^M_t-Y^M_{\eta^+_M(t)}|^p 
% = \ee |b(Y^M_{\eta^+_M(t)})(\eta^+_M(t)-s) + \sigma(Y^M_{\eta_M(t)}) (W_{\eta^+_M(t)}-W_t)|^p \\
% & \le C_p (1+ \ee |Y^M_{\eta_M(t)}|^p+ \ee |Y^M_{\eta^+_M(t)}|^{p (q-1)}) (|\eta^+_M(t)-s|^p+ \ee|W_{\eta^+_M(t)}-W_t|^p) \\
& \le C  (1+ \ee |X_0|^{p q}) \tau^{p/2}. \label{err-yt+} 
\end{align}
\end{lm}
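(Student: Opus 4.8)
The two estimates \eqref{err-yt} and \eqref{err-yt+} are completely symmetric in structure, so I would prove them in parallel, writing out the details for \eqref{err-yt} and remarking that \eqref{err-yt+} follows verbatim after swapping the roles of $\eta_M(t)$ and $\eta^+_M(t)$ and replacing $W_t - W_{\eta_M(t)}$ by $W_{\eta^+_M(t)} - W_t$. The starting point is the explicit one-step formula \eqref{yt}, which expresses the increment $Y^M_t - Y^M_{\eta_M(t)}$ as a sum of a drift term $b(Y^M_{\eta^+_M(t)})(t-\eta_M(t))$ and a noise term $\sigma(Y^M_{\eta_M(t)})(W_t - W_{\eta_M(t)})$. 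Since $t - \eta_M(t) \le \tau$ and $|W_t - W_{\eta_M(t)}|$ is a Gaussian increment over a time lag $\le \tau$, raising to the $p$-th power, taking expectations, and using the elementary inequality $|a+b|^p \le 2^{p-1}(|a|^p + |b|^p)$ reduces everything to bounding $\ee|b(Y^M_{\eta^+_M(t)})|^p$ and $\ee|\sigma(Y^M_{\eta_M(t)})|_{HS}^p$, each multiplied by $\tau^{p/2}$ (for the noise term one uses independence of $\sigma(Y^M_{\eta_M(t)})$ from the future increment $W_t - W_{\eta_M(t)}$, together with $\ee|W_t - W_{\eta_M(t)}|^p \le C\tau^{p/2}$; for the drift term one has the better power $\tau^p \le \tau^{p/2}$ since $\tau < 1$).

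**Reducing to moment bounds.** After this reduction the claim becomes: $\sup_{k \in \zz_M} \ee|b(Y^M_k)|^p \le C(1 + \ee|X_0|^{pq})$ and $\sup_{k \in \zz_M} \ee|\sigma(Y^M_k)|_{HS}^p \le C(1 + \ee|X_0|^{pq})$. The second follows immediately from the growth bound \eqref{gro-g}, which gives $|\sigma(Y^M_k)|_{HS} \le L_\sigma|Y^M_k| + |\sigma(0)|_{HS}$, combined with the stability estimate \eqref{sta-yk} from Proposition \ref{sta} (applicable since $\ee|X_0|^{pq} \le \ee|X_0|^p < \infty$ when $q \ge 1$, wait — one needs $\ee|X_0|^p < \infty$, which holds as $p \le pq$). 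For the first, Assumption \ref{ap-b} with $y = 0$ yields $|b(x)| \le |b(0)| + \widetilde{L_b}(1 + 2|x|^{q-1})|x| \le C(1 + |x|^q)$, so $\ee|b(Y^M_k)|^p \le C(1 + \ee|Y^M_k|^{pq})$, and then \eqref{sta-yk} applied with exponent $pq$ in place of $p$ (legitimate because $pq \ge p \ge 4$ and $\ee|X_0|^{pq} < \infty$ by hypothesis) gives $\ee|Y^M_k|^{pq} \le C(1 + \ee|X_0|^{pq})$. This is precisely the reason the hypothesis is stated as $\ee|X_0|^{pq} < \infty$ rather than merely $\ee|X_0|^p < \infty$.

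**Main obstacle.** The only genuinely delicate point is the verification that Proposition \ref{sta}'s estimate \eqref{sta-yk} may be invoked with the inflated exponent $pq$: one must check that $pq$ lies in the admissible range $\{2\} \cup [4,\infty)$ for that proposition. Since $p \ge 4$ and $q \ge 1$, we have $pq \ge 4$, so the hypothesis "$p \ge 4$" of Proposition \ref{sta} is met with $p$ replaced by $pq$, and the condition $2L_b\tau < 1$ is unchanged; note also that the second assertion of Proposition \ref{sta} (the interpolation bound) is not needed here, only \eqref{sta-yk}. Everything else — the binomial splitting, the Gaussian moment $\ee|W_t-W_{\eta_M(t)}|^p \le C_p\tau^{p/2}$, and the independence used to factor the noise term — is routine. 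I would therefore organize the proof as: (i) apply \eqref{yt}, split into drift and noise contributions; (ii) bound the noise contribution by $\ee|\sigma(Y^M_{\eta_M(t)})|_{HS}^p \cdot \ee|W_t - W_{\eta_M(t)}|^p$ via independence, then by \eqref{gro-g} and \eqref{sta-yk}; (iii) bound the drift contribution by $\tau^p \ee|b(Y^M_{\eta^+_M(t)})|^p$, then by the polynomial growth consequence of \eqref{A2} and \eqref{sta-yk} with exponent $pq$; (iv) collect terms, using $\tau^p \le \tau^{p/2}$, to obtain \eqref{err-yt}; (v) repeat for \eqref{err-yt+} using \eqref{yt+}.
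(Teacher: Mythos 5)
Your proposal is correct and follows essentially the same route as the paper: apply the one-step formulas \eqref{yt}--\eqref{yt+}, split into drift and noise parts, use the polynomial growth of $b$ from \eqref{A2} and the linear growth of $\sigma$ from \eqref{gro-g}, exploit independence of $Y^M_{\eta_M(t)}$ from the forward Wiener increment together with $\ee|W_t-W_{\eta_M(t)}|^p\le C\tau^{p/2}$, and close with the stability bound \eqref{sta-yk} applied at the inflated exponent $pq$ (which is the reason for the hypothesis $\ee|X_0|^{pq}<\infty$, exactly as you note). No gaps.
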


\begin{proof} 
By \eqref{yt} and the conditions \eqref{A1} and \eqref{A2},  
\begin{align*} 
& \ee |Y^M_s-Y^M_{\eta_M(s)}|^p 
= \ee |b(Y^M_{\eta^+_M(s)})(s-\eta_M(s)) 
+ \sigma(Y^M_{\eta_M(s)}) (W_s-W_{\eta_M(s)})|^p \\
& \le C_p (1+ \ee |Y^M_{\eta_M(s)}|^p
+ \ee |Y^M_{\eta^+_M(s)}|^{p q}) (|s-\eta_M(s)|^p
+ \ee|W_s-W_{\eta_M(s)}|^p) \\
& \le C_p (1+ \sup_{k \in \zz_M} \ee |Y^M_k|^{p q}) \tau^{p/2},
\end{align*}  
which, in combination with the moment's estimation \eqref{sta-yk}, shows \eqref{err-yt}.
Similar arguments, together with \eqref{yt+}, imply \eqref{err-yt+}. 
\end{proof}

Now we can give the proof of Theorem \ref{main3}.
By \eqref{main10}, the condition \eqref{A2}, H\"older inequality, and the H\"older estimate \eqref{hol-x}, we have
\begin{align*}   
& \sup_{k \in \zz_M} \ee |X_{t_k}-Y_k^M|^p \\
& \le C \sum_{i=0}^{M-1} \int_{t_i}^{t_{i+1}} \ee |b(X_r)-b(X_{t_{i+1}})|^p + \ee | X_r-X_{t_i}|^p {\rm d}r \\
% & \le C \sum_{i=0}^{M-1} \int_{t_i}^{t_{i+1}} \ee [(1+ \sup_{t \in [0, T]} |X_t|^{p(q-1)}) (|X_r-X_{t_i}|^p+|X_r-X_{t_{i+1}}|^p)]{\rm d} r\\
& \le C (1+ \sup_{t \in [0, T]} \ee |X_t|^{p(2q-1)})^\frac{q-1}{2q-1}
\Big(\sup_{t \neq s} \frac{\ee |X_t-X_s|^{\frac{p (2q-1)}q}}{|t-s|^{\frac{p (2q-1)}{2q}}} \Big)^\frac q{2q-1} \tau^\frac p2
\\ 
& \le C (1+\ee|X_0|^{p (2q-1)}) \tau^\frac p2.
\end{align*}  
This shows \eqref{err-sde}.
Similarly,
\begin{align*}  
& \ee \sup_{t \in [0, T]} |X_t-Y_t^M|^p \\
% & \le C \int_0^T \ee [(1+|Y^M|^{p(q-1)} +|Y^M_{\eta^+_M}|^{p(q-1)} )|Y^M-Y^M_{\eta^+_M}|^p]{\rm d} r\\
& \le C (1+\sup_{t \in [0, T]} \ee|Y^M_t|^{p (2q-1)})^\frac{q-1}{2q-1} \sup_{t \in [0, T]} (\ee |Y^M_t-Y^M_{\eta^+_M(t)}|^{\frac{p (2q-1)}q})^\frac q{2q-1} \\ 
& \le C (1+\ee|X_0|^{pq (2q-1)}) \tau^\frac p2,
\end{align*} 
which shows \eqref{err-sde+} and we complete the proof of Theorem \ref{main3}.

\section{$L^p$-Convergence Rate of Galerkin-Based BE Schemes of Monotone SPDEs}
\label{sec4}

In the last section, we apply our main results, Theorems \ref{main1}-\ref{main3}, to the following second-order SPDEs with polynomial growth coefficients:
\begin{align} \label{spde}
\begin{split}
& \frac{\partial X_t(\xi)}{\partial t}  
=\Delta X_t(\xi) + b(X_t(\xi)) 
+\sigma(X_t(\xi)) \frac{{\rm d} W_t(\xi)}{{\rm d}  t}, \quad (t,\xi)\in (0, T] \times \OOO, \\
& X_t(\xi)=0,\quad (t,\xi)\in [0, T] \times \partial \OOO,
\end{split}
\end{align} 
where $\OOO \subset \rr^d$ with $d=1,2,3$ is an open, bounded set with piecewise smooth boundary.

For $r \in [2, \infty]$ and $\theta \in \rr$, we set $(L^r=L^r(\OOO), \|\cdot\|_{L^r})$ and $(\dot H^{-\theta}=\dot H^{-\theta}(\OOO), \|\cdot\|_\theta)$ the usual Lebesgue and Sobolev interpolation spaces, respectively.
In particular, we denote $H:=L^2$, $V=\dot H^1$, and $V^*=\dot H^{-1}$.
Denote the inner product and norm in $H$ by $\|\cdot\|$ and $\<\cdot, \cdot\>$, respectively.
The norms in $V$ and $V^*$ and the dual between them are denoted by $\|\cdot\|_1$, $\|\cdot\|_{-1}$,  and $_1\<\cdot, \cdot\>_{-1}$, respectively.
Denote by $H_0:=Q^\frac12 H$ and 
$(\LL_2^0:=HS(H_0; H), \|\cdot\|_{\LL_2^0})$, $(\LL_2^1:=HS(H_0; V), \|\cdot\|_{\LL_2^1})$ the space of Hilbert--Schmidt operators from $H_0$ to $H$ and $V$, respectively. 
We also use $\LL(H_0, H)$ to denote the space of bounded linear operators $H_0$ to $H$.

The driving process $W$ in Eq. \eqref{spde} is an infinite-dimensional $H$-valued Wiener process, which has the Karhunen--Lo\`eve expansion 
\begin{align*} 
W(t)  =\sum_{k\in \nn_+} \sqrt{\lambda_k} g_k \beta_k(t),
\quad t\in [0,T].
\end{align*}
Here $\{g_k\}_{k=1}^\infty$ forming an orthonormal basis of $H$ are the eigenvectors of $Q$ subject to the eigenvalues $\{\lambda_k\}_{k=1}^\infty$, and $\{\beta_k\}_{k=1}^\infty $ are mutually independent Brownian motions in $(\Omega, \FFF, \ff, \pp)$.
We mainly focus on trace-class noise, i.e., $Q$ is a trace-class operator, or equivalently, $\sum_{k \in \nn_+} \lambda_k<\infty$.

For $q \ge 1$, define by $F: L^{(q+1)'} \rightarrow L^{q+1}$ the Nemytskii operator associated with $b$:
\begin{align} \label{F}
F(u)(\xi):=b(u(\xi)),\quad u\in V,\ \xi \in \OOO.
\end{align} 
where $(q+1)'$ denote the conjugation of $q+1$, i.e., $1/(q+1)'+1/(q+1)=1$.
Then it follows from Assumption \ref{ap-b} that the operator $F$ defined in \eqref{F} has a continuous extension from $L^{q+1}$ to $L^{(q+1)'}$ and satisfies
\begin{align} \label{con-F}
_{L^{(q+1)'}}\<F(x)-F(y), x-y\>_{L^{q+1}} \le L_b \|x-y\|^2,
\quad x,y\in L^q,
\end{align}
where $_{L^{(q+1)'}} \<\cdot, \cdot\>_{L^{q+1}}$ denotes the dual between $L^{(q+1)'}$ and $L^{q+1}$.
In particular, if $q \ge 1$ for $d=1,2$ and $q\in [1,3]$ for $d=3$, one has the embeddings
\begin{align} \label{emb} 
V \subset L^{2q} \subset  L^{q+1} \subset H \subset L^{(q+1)'}\subset L^{(2q)'}\subset V^*,
\end{align} 
so that \eqref{con-F} leads to 
\begin{align} \label{F-mon} 
_1\<u-v, F(u)-F(v)\>_{-1}  \le L_b \|u-v\|^2,
\quad u,v \in V.
\end{align}
Denote by $G: H\rightarrow \LL(H_0, H)$ the Nemytskii operator associated with $\sigma$:
\begin{align} \label{G}
G(u) g_k(\xi):=\sigma(u(\xi)) g_k(\xi),\quad u\in H,\ k \in \nn_+,\ \xi\in \OOO.
\end{align}
Then the SPDE \eqref{spde} is equivalent to the stochastic evolution equation 
\begin{align} \label{see}
{\rm d}X_t =(AX_t+F(X_t)) {\rm d}t+G(X_t) {\rm d}W_t, \quad t \in (0, T].
\end{align}   

We focus on Galerkin-based BE schemes of Eq. \eqref{see}.
Let $h\in (0,1)$, $\TT_h$ be a regular family of partitions of $\OOO$ with maximal length $h$, and $V_h \subset V$ be the space of continuous functions on $\bar \OOO$ which are piecewise linear over $\TT_h$ and vanish on the boundary $\partial \OOO$.
Let $A_h: V_h \rightarrow V_h$ and $\PP_h: V^* \rightarrow V_h$ be the discrete Laplacian and generalized orthogonal projection operators, respectively, defined by 
\begin{align*}  
\<A_h u^h, v^h\> & =-\<\nabla u^h, \nabla v^h\>,
\quad u^h, v^h\in V_h,  \\
\<\PP_h u, v^h\> & =_1\<v^h, u\>_{-1},
\quad u\in V^*,\ v^h\in V_h. 
\end{align*}
Let $N \in \nn_+$ and $V_N$ be the space spanned by the first $N$-eigenvectors of the Dirichlet Laplacian operator which vanish on the boundary $\partial \OOO$.
Similarly, one can define spectral Galerkin approximate Laplacian and generalized orthogonal projection operators, respectively, as 
\begin{align*}  
\<A_N u^N, v^N\> & =-\<\nabla u^N, \nabla v^N\>,
\quad u^N, v^N \in V_N,  \\
\<\PP_N u, v^N\> & =_1\<v^N, u\>_{-1},
\quad u\in V^*,\ v^N \in V_N. 
\end{align*}

Our main condition on the diffusion operator $G$ defined in \eqref{G} is the following assumption.

\begin{ap} \label{ap-g}
The operator $G:H\rightarrow \LL_2^0$ defined in \eqref{G} is Lipschitz continuous, i.e., there exists a constant $L_\sigma \ge 0$ such that 
\begin{align}  \label{B2} \tag{B2} 
\|G(u)-G(v)\|_{\LL_2^0} \le L_\sigma \|u-v\|,
\quad u,v\in H.
\end{align}
Moreover, $G(\dot H^1)\subset \LL_2^1$ and there exists a constant $\widetilde{L_\sigma} \ge 0$ such that 
\begin{align} \label{B3}\tag{B3} 
\|G(z) \|_{\LL_2^1} \le \widetilde{L_\sigma}(1+ \|z\|_1),
\quad z \in V.  
\end{align}
\end{ap}

Under conditions \eqref{A1}, \eqref{A2}, \eqref{B2},  and \eqref{B3}, Eq. \eqref{spde}, or equivalently, Eq. \eqref{see} with initial datum $X_0 \in V$ possesses a unique $\ff$-adapted solution $\{X_t\}_{t \in [0, T]}$ in $V$ with continuous sample paths, see \cite{Liu13(JDE)}.
Moreover, we proved in \cite[Theorem 3.1]{LQ20(SPDE)} the following moment's estimation provided $X_0\in L^p(\Omega; V)$ with $p \ge q+1$, $q \ge 1$ for $d=1,2$ and $q\in [1,3]$ for $d=3$:
\begin{align} \label{sta-spde}
\ee \sup_{t\in [0,T]} \|X_t\|^p_1 
 \le C(1+ \ee \|X_0\|^p_1 ).
\end{align} 
  
The Galerkin finite element-based BE scheme for Eq. \eqref{see} is to find a sequence of $\ff$-adapted $V_h$-valued process $\{X_m^h:\ m \in \zz_M\}$ such that
\begin{align}\label{be1} 
X^h_{m+1}
=X_m^h+\tau A_h X^h_{m+1}
+\tau \PP_h F(X^h_{m+1})
+\PP_h G(X_m^h) \delta_m W,  \quad m \in \zz_{M-1},
\end{align}
with the initial value $X^h_0=\PP_h X_0$.
We call it finite element BE scheme.  
It is clear that the finite element BE scheme \eqref{be1} is equivalent to the scheme
\begin{align}\label{full}
X^h_{m+1}=S_{h,\tau} X_m^h+\tau S_{h,\tau} \PP_h F(X^h_{m+1})
+S_{h,\tau} \PP_h G(X_m^h) \delta_m W,
\quad m\in \zz_{M-1},
\end{align}
with initial datum $X^h_0=\PP_h H_0$, where $S_{h,\tau}:=({\rm Id}-\tau A_h)^{-1}$ is a space-time approximation of the continuous semigroup $S$ in one step.
Iterating \eqref{full} for $m$-times, we obtain for $m\in \zz_{M-1}$,
\begin{align}\label{full-sum}
X^h_{m+1}
=S_{h,\tau}^{m+1} X^h_0+\tau \sum_{i=0}^m S_{h,\tau}^{m+1-i} \PP_h F(X_h^{i+1})
+\sum_{i=0}^m S_{h,\tau}^{m+1-i} \PP_h G(X_h^i) \delta_i W.
\end{align}

Similarly, the spectral Galerkin BE scheme for Eq. \eqref{see} is to find a sequence of $\ff$-adapted $V_h$-valued process $\{X_m^N:\ m \in \zz_M\}$ such that
\begin{align}\label{be2} 
X^N_{m+1}
=X_m^N+\tau A_N X^N_{m+1}
+\tau \PP_N F(X^N_{m+1})
+\PP_N G(X_m^N) \delta_m W,  \quad m \in \zz_{M-1},
\end{align}
with the initial value $X^N_0=\PP_N X_0$. 

Using the idea in the SODEs setting in Section \ref{sec3}, we have the following $L^p$-convergence rate of the finite element BE scheme \eqref{be1}.
Note that the case $p=2$ had been shown in \cite{LQ20(SPDE)}.

\begin{tm}  \label{tm-err-spde}
Let \eqref{A1}, \eqref{A2}, \eqref{B2}, and \eqref{B3} hold for some $L_b \in \rr$, $\widetilde{L_b}, L_\sigma, \widetilde{L_\sigma} \ge 0$ and $q \ge 1$ for $d=1,2$ and $q\in [1,3]$ for $d=3$.
Assume that $\ee \|X_0\|_1^{p (q^2+q-1)}<\infty$ for some $p \ge \max\{4, q+1\}$, then for any $M \in \nn_+$ such that $2 L_b \tau<1$, there exists $C$ such that     
\begin{align} 
& \sup_{k \in \zz_M} \ee \|X_{t_k}-X^h_k\|^p 
\le C (1+\ee|X_0|^{p (q^2+q-1)}) (h^p+\tau^{p/2}), \label{err-spde} \\
& \sup_{k \in \zz_M} \ee \|X_{t_k}-X^N_k\|^p 
\le C (1+\ee|X_0|^{p (q^2+q-1)}) (N^{-p}+\tau^{p/2}). \label{err-spde+}
\end{align}  
\end{tm}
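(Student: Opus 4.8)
\textbf{Proof plan for Theorem \ref{tm-err-spde}.}

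The plan is to mirror the SODE argument of Sections \ref{sec2} and \ref{sec3} in the Hilbert-space setting, so the key is to recast the finite element BE scheme \eqref{be1} as an equation whose error analysis is structurally identical to the one for \eqref{be}, plus a spatial discretization term. First I would establish the discrete-time $p$-moment stability of $\{X^h_k\}$, that is, the analogue of Proposition \ref{sta}: testing \eqref{be1} by $X^h_{m+1}$, using the identity \eqref{ab} in $H$, the coercivity consequences of \eqref{A1} and \eqref{B2}, the contractivity $\|S_{h,\tau}\|_{\LL(H)}\le 1$, and the discrete Burkholder inequality exactly as in \eqref{dis-BDG} (hence the restriction $p\ge 4$), one gets $\ee\sup_{k\in\zz_M}\|X^h_k\|^p\le C(1+\ee\|X_0\|^p)$. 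Combining this with the $V$-regularity estimate \eqref{sta-spde} for the exact solution and with the smoothing/error properties of $S_{h,\tau}$, $\PP_h$ from \cite{LQ20(SPDE)}, I would also record the higher-moment bounds $\sup_k\ee\|X^h_k\|_1^p$ and $\sup_k\ee\|X^h_{k+1}-X^h_k\|^p$ needed to control the growth terms coming from \eqref{A2}.

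Next I would derive an $L^p$-error representation for $\varepsilon_k:=X_{t_k}-X^h_k$ paralleling Theorem \ref{main1}. Writing $X_{t_{k+1}}=S_{h,\tau}X_{t_k}+\dots$ is not available directly, so instead I would compare $X^h_k$ with $\PP_h X_{t_k}$ and split $\varepsilon_k=(X_{t_k}-\PP_hX_{t_k})+(\PP_hX_{t_k}-X^h_k)=:\rho_k+\theta_k^h$. The term $\rho_k$ is the deterministic finite element projection error, bounded by $Ch^p\ee\|X_{t_k}\|_1^p$ (and $CN^{-p}\ee\|X_{t_k}\|_1^p$ in the spectral case) using \eqref{sta-spde}; for the evolution term $\theta_k^h$ I would subtract the projected mild/variational form of \eqref{see} from \eqref{be1}, test by $\theta_{k+1}^h$ (the correct pairing since $\theta_{k+1}^h\in V_h$), and use \eqref{ab}, the monotonicity \eqref{F-mon} (which is why the embeddings \eqref{emb} and $p\ge q+1$ are needed so $F(X)$ lies in the right dual space), \eqref{B2}, and the discrete Burkholder estimate as in \eqref{err-ek}--\eqref{dis-BDG}. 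Gr\"onwall then yields
\begin{align*}
\sup_{k\in\zz_M}\ee\|X_{t_k}-X^h_k\|^p
\le C\, h^p\sup_{t\in[0,T]}\ee\|X_t\|_1^p
+ C\sum_{i=0}^{M-1}\int_{t_i}^{t_{i+1}}\ee\|F(X_r)-F(X_{t_{i+1}})\|_{(q+1)'}^p
+\ee\|X_r-X_{t_i}\|^p\,{\rm d}r + (\text{time-discretization remainders}).
\end{align*}

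Finally I would bound the right-hand side to get the rate $h^p+\tau^{p/2}$. The temporal part is handled exactly as in the proof of Theorem \ref{main3}: a H\"older-in-time estimate for $\{X_t\}$ in $H$ of order $\tau^{1/2}$ (established as in Lemma \ref{lm-hol-x}, using \eqref{gro-g}, the growth of $F$ from \eqref{con-F}/\eqref{A2}, \eqref{B2}, \eqref{sta-spde}), together with the polynomial growth \eqref{A2} and a H\"older-inequality splitting of the $\|F(X_r)-F(X_{t_{i+1}})\|^p$ term into a high-moment norm of $X$ (this is where the exponent $q^2+q-1$ enters: one needs $\ee\sup_t\|X_t\|_1^{p(q^2+q-1)}<\infty$, guaranteed by \eqref{sta-spde} under the hypothesis on $X_0$) times a $\tau^{p/2}$ factor. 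The spatial part is the projection error $h^p\sup_t\ee\|X_t\|_1^p$ bounded via \eqref{sta-spde}; for \eqref{err-spde+} one replaces $h$ by $N^{-1}$ using the spectral projection error $\|u-\PP_N u\|\le CN^{-1}\|u\|_1$ and the fact that $A_N$, $\PP_N$, $S_{N,\tau}$ enjoy the same contractivity and smoothing bounds. I expect the main obstacle to be the rigorous handling of the mixed drift term in the error equation for $\theta_k^h$: one must keep $F(X)$ paired against a $V$-element so that \eqref{F-mon} applies, while simultaneously absorbing the $L^{2q}$-type growth factors into the moment bounds from the stability step — this is exactly the place where all the integrability hypotheses ($p\ge\max\{4,q+1\}$ and the high moment of $X_0$) are consumed, and where the Hilbert-space argument genuinely departs from the finite-dimensional one.
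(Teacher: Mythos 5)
Your overall strategy (discrete stability, monotone testing of an error equation, discrete Burkholder, Gr\"onwall, then H\"older-in-time plus polynomial growth) is in the right spirit, but your decomposition differs from the paper's in a way that creates a genuine gap. The paper does \emph{not} split the error as $(X_{t_k}-\PP_h X_{t_k})+(\PP_h X_{t_k}-X^h_k)$ and does \emph{not} prove moment bounds for the scheme $X^h_k$ itself. Instead it introduces an auxiliary process $\widetilde{X}^h_k$, defined by the discrete convolution \eqref{full-sum} with the \emph{exact} solution values $X_{t_{i+1}}$, $X_{t_i}$ frozen inside $F$ and $G$. The error is then split as $(X_{t_k}-\widetilde{X}^h_k)+(\widetilde{X}^h_k-X^h_k)$: the first piece is exactly the object estimated in \cite[Lemma 4.2]{LQ20(SPDE)} (this is where all the deterministic FEM/semigroup error analysis lives, giving $h^p+\tau^{p/2}$), and the second piece satisfies an error equation \eqref{eq-err-spde} whose drift difference $F(X_{t_{k+1}})-F(X^h_{k+1})$ is split through $F(\widetilde{X}^h_{k+1})$, so that the monotone part $F(\widetilde{X}^h_{k+1})-F(X^h_{k+1})$ is absorbed by \eqref{F-mon} and the remaining part $F(X_{t_{k+1}})-F(\widetilde{X}^h_{k+1})$ only ever involves $X_{t_k}$ and $\widetilde{X}^h_k$ in the polynomial growth factors.

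The gap in your plan is precisely the point you flag as "needed to control the growth terms coming from \eqref{A2}": you require $\sup_k\ee\|X^h_k\|_1^p$ (or an $L^{2q}$-moment of $X^h_k$) to absorb the factors $1+\|X_{t_k}\|_{L^{2q}}^{q-1}+\|X^h_k\|_{L^{2q}}^{q-1}$ arising when you estimate $F(X_{t_k})-F(X^h_k)$ directly. A $V$-norm $p$-moment bound for the fully nonlinear implicit FEM scheme is not obtainable by the Proposition \ref{sta}-type testing argument: testing \eqref{be1} by $-A_hX^h_{m+1}$ produces the term $\<\PP_h F(X^h_{m+1}),-A_hX^h_{m+1}\>$, which has no usable sign because $A_h$ and the Nemytskii operator do not satisfy a discrete chain rule, and this is exactly the uniform $p$-moment obstruction for implicit schemes with multiplicative noise that the introduction singles out. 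The paper's auxiliary process is engineered so that only $\sup_k\ee\|\widetilde{X}^h_k\|_1^{p}$ is needed (estimate \eqref{sta-aux1}), which is easy because the nonlinearity inside $\widetilde{X}^h_k$ is evaluated at the exact solution and needs no sign condition; the scheme $X^h_k$ then only ever appears through the $H$-norm of $e^h_k=\widetilde{X}^h_k-X^h_k$ via \eqref{ekh} and the $V^*$-growth bound \eqref{f-1}. A secondary issue with your $\rho_k+\theta_k^h$ splitting is that $\PP_h$ does not intertwine $A$ with $A_h$, so the $\theta$-equation carries a nontrivial consistency term that would in any case force you back to the semigroup-based analysis of \cite{LQ20(SPDE)}; adopting the paper's auxiliary process imports that analysis wholesale and removes both difficulties at once.
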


\begin{proof}
It suffices to prove \eqref{err-spde}; similar arguments  would immediately yield \eqref{err-spde+}.
To this end, we introduce the auxiliary process 
\begin{align}\label{aux1}
\widetilde{X}^h_{m+1}
=S_{h,\tau}^{m+1} X^h_0
+\tau \sum_{i=0}^m S_{h,\tau}^{m+1-i} \PP_h F(X_{t_{i+1}})
+\sum_{i=0}^m S_{h,\tau}^{m+1-i} \PP_h G(X_{t_i}) \delta_i W,
\end{align}
for $m\in \zz_{M-1}$, where the terms $X_h^{i+1}$ and $X_h^i$ in the discrete deterministic and stochastic convolutions of \eqref{full-sum} are replaced by $X_{t_{i+1}}$ and $X_{t_i}$, respectively.
From \eqref{aux1}, it is clear that 
\begin{align}\label{aux1+}
\widetilde{X}^h_{m+1}
=\widetilde{X}_m^h+\tau A_h \widetilde{X}^h_{m+1}
+\tau \PP_h F(X_{t_{m+1}})
+\PP_h G(X_{t_m}) \delta_m W.
\end{align}
Using the argument in Proposition \ref{sta}, it is not difficult to show that 
\begin{align}\label{sta-aux1}
\sup_{k \in \zz_M} \ee \|\widetilde{X}^h_k\|_1^p
\le C (1+ \ee\|X_0\|_1^{p q}).
\end{align}

In \cite[Lemma 4.2]{LQ20(SPDE)}, we had proved that  
% A slight version of argument in Lemma 4.2 
\begin{align} \label{err-spde0}
\sup_{k \in \zz_M} \ee \|X_{t_k}-\widetilde{X}^h_k\|^p 
\le C (1+ \ee \|X_0\|_1^{p(2q-1)}) (h^p+\tau^\frac p2).
\end{align} 
It remains to estimate $\sup_{k \in \zz_M} \ee \|\widetilde{X}^h_k-X^h_k\|^p$.
For $k \in \zz_M$, denote  
$e^h_k: =\widetilde{X}^h_k-X^h_k$.
Then by \eqref{be1} and \eqref{aux1+} we have 
\begin{align} \label{eq-err-spde}
e^h_{k+1}-e^h_k 
& = \tau \Delta e^h_{k+1}
+ \tau (F(X_{t_{k+1}})-F(X^h_{k+1}) ) 
+ (G(X_{t_k})-G(X^h_k)) \delta_k W.
\end{align} 
Testing $e^h_{k+1}$ and using the inequality \eqref{ab} which holds also for all $\alpha, \beta \in H$ and the condition \eqref{F-mon}, we obtain
\begin{align*} 
& \frac12 ( \|e^h_{k+1}\|^2 - \|e^h_k\|^2) 
+  \frac12 \|e^h_{k+1}-e^h_k\|^2 
+ \|\nabla e^h_{k+1}\|^2 \tau \\
&= _1\<e^h_{k+1}, F(X_{t_{k+1}})-F(\widetilde{X}^h_{k+1})\>_{-1} \tau
+ _1\<e^h_{k+1}, F(\widetilde{X}^h_{k+1})-F(X^h_{k+1}) \>_{-1} \tau \\
& \quad + \<e^h_{k+1}-e^h_k, (G(X_{t_k})-G(X^h_k)) \delta_k W\>
+ \<e^h_k, (G(X_{t_k})-G(X^h_k)) \delta_k W\>\\
&\le \|\nabla e^h_{k+1}\|^2 \tau
+L_b \tau \|e^h_{k+1}\|^2 + \<e^h_k, (G(X_{t_k})-G(X^h_k)) \delta_k W\> \\
&\quad +C \tau \|F(X_{t_{k+1}})-F(\widetilde{X}^h_{k+1})\|^2_{-1}  
\nonumber \\
&\quad + \frac12 \|e^h_{k+1}-e^h_k\|^2
+ L_\sigma^2 \|X_{t_k}-\widetilde{X}^h_k\|^2 \|\delta_k W\|^2
+L_\sigma^2 \|e^h_k\|^2 \|\delta_k W\|^2.
\end{align*} 
It follows that 
\begin{align*}  
(1-2 L_b \tau) \|e^h_{k+1}\|^2 
& \le (1+2 L_\sigma^2 \|\delta_k W\|^2) \|e^h_k\|^2 
+C \tau \|F(X_{t_{k+1}})-F(\widetilde{X}^h_{k+1})\|^2_{-1}   \\
&\quad + 2 L_\sigma^2 \|X_{t_k}-\widetilde{Y}^M_k\|^2 \|\delta_k W\|^2 
+ 2 \<e^h_k, (G(X_{t_k})-G(X^h_k)) \delta_k W\>. 
\end{align*}

Using the same arguments to derive \eqref{err-ek} in the proof of Theorem \ref{main1}, we get 
\begin{align*}
\sup_{k \in \zz_M} \ee \|e^h_k\|^p  
& \le C \tau \sum_{i=0}^{M-1} \ee \|e^h_i\|^p
+ C \tau^\frac p2 \ee \Big( \sum_{i=0}^{M-1} \|F(X_{t_{i+1}})-F(\widetilde{X}^h_{i+1})\|^2_{-1}\Big)^\frac p2 \nonumber \\
& \quad + C \ee \Big( \sum_{i=0}^{M-1} \|X_{t_i}-\widetilde{Y}^M_i\|^2 \|\delta_k W\|^2  \Big)^\frac p2  \nonumber \\
& \quad + C \ee \Big| \sum_{i=0}^{M-1} \<e^h_i, (G(X_{t_i})-G(X^h_i)) \delta_i W\> \Big|^\frac p2 \nonumber  \\
& \le C \tau \sum_{i=0}^{M-1} \ee \|e^h_i\|^p
+ C \sup_{k \in \zz_M} \ee \|F(X_{t_k})-F(\widetilde{X}^h_k)\|^p_{-1} \nonumber \\
& \quad + C \sup_{k \in \zz_M} \ee \|X_{t_k}-\widetilde{X}^h_k\|^p
+ C  \ee \Big| \sum_{i=0}^{M-1} \<e^h_i, (G(X_{t_i})-G(X^h_i)) \delta_i W\> \Big|^\frac p2. 
\end{align*} 
Applying discrete Burkholder inequality as used in \eqref{dis-BDG} and Cauchy--Schwarz inequality, we have
\begin{align*}   
\ee \Big| \sum_{i=0}^{M-1} \<e^h_i, (G(X_{t_i})-G(X^h_i)) \delta_i W\> \Big|^\frac p2 
% & \le C \tau \sum_{i=0}^{k-1} \ee [\|e^h_i\|^\frac p2 \|X_{t_i}-X^h_i\|^\frac p2]   \\
\le C \tau \sum_{i=0}^{k-1} \ee \|e^h_i\|^p 
+ C \sup_{k \in \zz_M} \ee \|X_{t_k}-\widetilde{X}^h_k\|^p.
\end{align*}  
Consequently, we get 
\begin{align} \label{ekh}
\sup_{k \in \zz_M} \ee \|e^h_k\|^p  
\le C \big(\sup_{k \in \zz_M} \ee \|X_{t_k}-\widetilde{X}^h_k\|^p
+\sup_{k \in \zz_M} \ee \|F(X_{t_k})-F(\widetilde{X}^h_k)\|^p_{-1} \big). 
\end{align} 

It follows from the embedding \eqref{emb} that 
\begin{align} \label{f-1}
\|F(u)-F(v)\|_{-1} 
\le C (1+\|u\|^{q-1}_{L^{2q}}+\|v\|^{q-1}_{L^{2q}} ) \|u-v\|,
\quad u,v \in V.
\end{align} 
Then \eqref{ekh} implies 
\begin{align*}
\sup_{k \in \zz_M} \ee \|e^h_k\|^p  
& \le C \sup_{k \in \zz_M} \ee [(1+\|X_{t_k}\|^{p(q-1)}_1+\|\widetilde{X}^h_k\|^{p(q-1)}_1 ) \|X_{t_k}-\widetilde{X}^h_k\|^p] \\
& \le C (1+\sup_{t \in [0, T]} \ee \|X_t\|_1^{\frac{p (q^2+q-1)}q}
+\sup_{k \in \zz_M} \ee \|\widetilde{X}^h_k\|_1^{\frac{p (q^2+q-1)}q})^\frac{q(q-1)}{q^2+q-1} \\ 
& \qquad \times \sup_{t \in [0, T]} (\ee \|X_{t_k}-\widetilde{X}^h_k\|^{\frac{p (q^2+q-1)}{2q-1}})^\frac{2q-1}{q^2+q-1}.
\end{align*} 
Then we conclude \eqref{err-spde} by the above estimate, the stability \eqref{sta-aux1}, and the error estimate \eqref{err-spde0}.
\end{proof}

\iffalse
\begin{rk}
We do not consider the $L^p$-convergence rate of the corresponding interpolation process associated with the Galerkin finite element BE schemes \eqref{full}, 
since it is difficult to construct an appropriate auxiliary process of the interpolation process as done in the SODEs setting. 
\end{rk}
\fi

% \section*{Acknowledgements}

\bibliographystyle{amsalpha}
\bibliography{bib}

\end{document}